\newcommand{\comment}[1]{}
\def\Z{\mathbb{Z}}
\def\N{\mathbb{N}}
\theoremstyle{plain}
\newtheorem*{Theorem*}{Theorem 1}
\newtheorem*{Lemma*}{Lemma}
\newtheorem{Definition}{Definition}[section]
\newtheorem{Remark}[Definition]{Remark}
\newtheorem{Theorem}[Definition]{Theorem}
\newtheorem{Lemma}[Definition]{Lemma}
\newtheorem{Problem}[Definition]{Problem}
\newtheorem{Corollary}[Definition]{Corollary}
\title[Variants of the Erd\H{o}s distinct sums problem and variance]{Variants of the Erd\H{o}s distinct sums problem\\ and variance method}
\author{Simone Costa, Stefano Della Fiore, Andrea Ferraguti}
\begin{document}
\begin{abstract}
Let $\Sigma=\{a_1, \ldots , a_n\}$ be a set of positive integers with $a_1 < \ldots < a_n$ such that all $2^n$ subset sums are pairwise distinct. A famous conjecture of Erd\H{o}s states that $a_n>C\cdot 2^n$ for some constant $C$, while the best result known to date is of the form $a_n>C\cdot 2^n/\sqrt{n}$. In this paper, we propose a generalization of the Erd\H{o}s distinct sum problem that is in the same spirit as those of the Davenport and the Erd\H{o}s-Ginzburg-Ziv constants recently introduced in \cite{CGS} and in \cite{CS}. More precisely, we require that the non-zero evaluations of the $m$-th degree symmetric polynomial are all distinct over the subsequences of $\Sigma$ whose size is at most $\lambda n$, for a given $\lambda\in (0,1]$, considering $\Sigma$ as a sequence in $\mathbb{Z}^k$ with each coordinate of each $a_i$ in $[0,M]$. If $\mathcal{F}_{\lambda,n}$ denotes the family of subsets of $[1,n]$ whose size is at most $\lambda n$, our main result is that, for each $k,m,$ and $\lambda$, there exists an explicit constant $C_{k,m,\lambda}$ such that
$$ M\geq C_{k,m,\lambda} \frac{(1+o(1)) |\mathcal{F}_{\lambda,n}|^{\frac{1}{mk}}}{n^{1 - \frac{1}{2m}}}.$$
\end{abstract}
\keywords{Erd\H{o}s distinct-sums problem, variance method}
\subjclass[2010]{05D40, 11B13}

\maketitle

\section{Introduction}
For any $n\geq 1$, consider sets $\{a_1,\ldots , a_n\}$ of positive integers with $a_1 < \ldots < a_n$ whose subset sums are all distinct. A famous conjecture, due to Paul Erd\H{o}s, is that $a_n \geq C \cdot 2^n$ for some constant $C > 0$. Using the variance method, Erd\H{o}s and Moser \cite{Erdos} (see also \cite{Alon2} and \cite{Guy2}) were able to prove that
$$a_n\geq 1/4\cdot n^{-1/2}\cdot 2^n.$$
No advances have been made so far in removing the term $n^{-1/2}$ from this lower bound, but there have been several improvements on the constant factor, including the
work of Dubroff, Fox, and Xu \cite{Fox}, Guy \cite{Guy}, Elkies \cite{Elkies}, Bae \cite{Bae}, and Aliev \cite{Aliev}. In particular, the best currently known lower bound states that
$$
a_n\geq \sqrt{\frac{2}{\pi}} \frac{1}{\sqrt{n}}2^{n} (1+o(1)).
$$
Two simple proofs of this result, first obtained unpublished by Elkies and Gleason, are presented in \cite{Fox}. In the other direction, the best-known
construction is due to Bohman \cite{Bohman} (see also \cite{Lunnon}), who showed that there exist arbitrarily large such sets with $a_n \leq 0.22002 \cdot 2^n$.

Several variations on the problem have appeared during the years, such as \cite{MYR} and \cite{CDD}.

Following the notation of \cite{CDD}, we denote by $\mathcal{F}_{\lambda,n}$ the family of all subsets of $[1,n]$ having size at most $\lambda n$, where $\lambda\in (0,1]$ is a given constant. The following problem was introduced in \cite{CDD}.

\begin{Problem}\label{Lambda}
For every positive integer $n$, find the least positive $M=M(n)$ such that there exists a sequence $\Sigma=(a_1,\ldots,a_n)$ of elements of $\mathbb{Z}^k$ with $a_i\in [0,M]^k$ for every $i$ (i.e.\ with every $a_i$ having non-negative coordinates) such that for all distinct $A_1,A_2\in \mathcal{F}_{\lambda,n}$ we have that:
$$\sum_{i\in A_1} a_i\not= \sum_{i\in A_2} a_i.$$
\end{Problem}

In this paper, we propose two progressive variants of the Erd\H{o}s distinct sum problem. The first one, inspired by the recent works \cite{CGS} and in \cite{CS} on Davenport and the Erd\H{o}s-Ginzburg-Ziv constants, is to require that the non-zero evaluations of the $m$-th degree symmetric polynomial are all distinct over the sub-sequences of $\Sigma$.

Given a sequence of integers $\Sigma=(a_1,\ldots, a_n)$ and a subset $A\subseteq [1,n]$, we define the $m$-th (degree) evaluation
$$e^m_{\Sigma}(A)=\sum_{\substack{\{i_1,\ldots, i_m\}\subseteq A\\ i_1<\ldots< i_m}} a_{i_1}\cdots a_{i_m},$$
where we adopt the convention that $e_{\Sigma}^m(A)=0$ if $|A|<m$.

\begin{Problem}\footnote{This problem has been presented at the international conference ``Eurocomb 2023'', see \cite{CDF}.}\label{SmallSize}
For every positive integer $n$, find the least positive $M=M(n)$ such that there exists a sequence $\Sigma=(a_1,\ldots,a_n)$ of integers with $a_i\in [0,M]$ for every $i$ such that for all distinct $A_1,A_2\subseteq [1,n]$ of size at least $m$ we have that:
$$e^m_{\Sigma}(A_1)\not= e^m_{\Sigma}(A_2).$$
\end{Problem}
A sequence as in Problem \ref{SmallSize} will be called \emph{$M$-bounded $m$-th evaluation distinct}. The same terminology applies to sequences in $\mathbb{Z}^k$ such that every entry belongs to $[0,M]^k$ and that the evaluation vectors are all distinct.

By computing the variance of the random variable that associates to a randomly chosen set $A\subseteq [1,n]$ the evaluation $e^m_{\Sigma}(A)$, one finds, surprisingly (these evaluations can not be seen as the values assumed by the sum of independent random variables), that the terms of higher degree disappear, and hence the variance method provides a nontrivial bound on $M$. In order to understand the cause of this behavior, we then investigate a generalization of Problem \ref{SmallSize} that is in the same spirit as Problem \ref{Lambda}. We consider here a sequence $\Sigma$ in $\mathbb{Z}^k$ and we require that the vector-valued evaluations of $e^m_{\Sigma}$ are all distinct.

\begin{Problem}\label{SmallSizeLambda}
For every positive integer $n$, find the least positive $M=M(n)$ such that there exists an $M$-bounded sequence $\Sigma=(a_1,\ldots,a_n)$ in $\mathbb{Z}^k$ such that, for all distinct $A_1,A_2\in \mathcal{F}_{\lambda,n}$ of size at least $m$,
$$e^m_{\Sigma}(A_1) \not= e^m_{\Sigma}(A_2).$$
\end{Problem}

For this very general problem, a symmetry argument allows us to show that the terms of higher degree disappear, explaining therefore the behavior of Problems \ref{Lambda} and \ref{SmallSize}. Our result on Problem \ref{SmallSizeLambda} also improves, in the case $m=1$ and $\lambda<1/2$, Theorem 2.5 of \cite{CDD}, that left this case open.% On the other hand, the inequalities used to approach Problem \ref{SmallSizeLambda} are not sharp. As a consequence, for Problem \ref{SmallSizeLambda} we will find a non-trivial result that has the same trend but worse coefficients than the ones we get by approaching directly Problems \ref{Lambda} and \ref{SmallSize}.

The paper is organized as follows.
Section $2$ is devoted to providing a direct lower bound on the values of $M$ in Problem \ref{SmallSize}. Using the variance method, we provide a nontrivial bound on $M$; in particular we prove that:
$$M>C_m\cdot 2^{\frac{n}{m}}/n^{1-\frac{1}{2m}}.$$
In Section $3$ we obtain, combining a symmetry argument with the variance method, a nontrivial bound on $M$ for Problem \ref{SmallSizeLambda}. In particular, we prove that:
$$ M\geq  C_{k,m,\lambda} \frac{(1+o(1))|\mathcal{F}_{\lambda,n}|^{\frac{1}{mk}}}{n^{1 - \frac{1}{2m}}}
$$
where $C_{k,m,\lambda}$ is an explicit constant.

Finally, in Section $4$, we derive a probabilistic upper bound on the constant $M$ of Problem \ref{SmallSizeLambda} and we present a direct construction to improve this bound for Problem \ref{SmallSize}.
\section{Lower Bounds via Variance Method: Problem \ref{SmallSize}}
In Problem \ref{SmallSize}, a first lower bound to the value of $M$ can be provided using the pigeonhole principle. Indeed, since the number of non-zero evaluations of $e^m_{\Sigma}$ is $2^n-\sum_{i=0}^{m-1} {n \choose i}=(1+o(1))2^n$, these evaluations are spaced at least by one, and each of these is smaller than $e^m_{\Sigma}([1,n])\leq {n\choose m} M^m\leq n^m/(m!) M^m $, it follows that:
$$M>D_m\cdot 2^{\frac{n}{m}}/n.$$

In this section we show, using the variance method (see \cite{Alon2}, \cite{Erdos} or \cite{Guy}), that it is possible to improve this lower bound. Note that the lower bound we will find in this proof holds also when considering sequences of real numbers. In this case, we consider a sequence $\Sigma$ to be $m$-th evaluation distinct whenever $|e_{\Sigma}^m(A_1)-e_{\Sigma}^m(A_2)|\geq 1$ for any distinct $A_1,A_2\subseteq [1,n]$ of size at least $m$.
\begin{Theorem}\label{teo:multidimguy}
Let $\Sigma=(a_1,\ldots,a_n)$ be an $m$-th evaluation distinct sequence in $\mathbb{Z}$ (resp. $\mathbb{R}$) that is $M$-bounded. Then
$$M>\frac{2^{1-\frac{1}{m}}((m-1)!)^{\frac{1}{m}}}{3^{\frac{1}{2m}}} \frac{ 2^{\frac{n}{m}}}{n^{1-\frac{1}{2m}}} (1+o(1)).$$
\end{Theorem}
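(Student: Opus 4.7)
The plan is to apply the variance method to the random variable $X = e^m_\Sigma(A)$, where $A$ is a uniformly random subset of $[1,n]$; I will bound $\sigma^2 = \mathrm{Var}(X)$ from above in terms of $M$ and from below using the distinctness hypothesis, and then combine.

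For the variance computation, writing $a_S = \prod_{i \in S} a_i$ for an $m$-subset $S$ and grouping pairs $(S, T)$ by $j = |S \cap T|$ gives
$$\mathrm{Var}(X) = \mathbb{E}[X^2] - \mathbb{E}[X]^2 = \frac{1}{4^m} \sum_{j=1}^m (2^j - 1) V_j, \qquad V_j = \sum_{\substack{|S| = |T| = m \\ |S \cap T| = j}} a_S a_T.$$
Crucially, the $j = 0$ term cancels because disjoint $S$ and $T$ yield independent indicators $\mathbf{1}[S \subseteq A]$ and $\mathbf{1}[T \subseteq A]$; this cancellation is the ``disappearance'' of the top-degree piece alluded to in the introduction. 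Bounding each $a_i \leq M$ gives $V_j \leq \binom{n}{m}\binom{m}{j}\binom{n-m}{m-j} M^{2m}$, and a direct asymptotic comparison in $n$ shows that the $j = 1$ term dominates, yielding
$$\sigma \leq (1 + o(1)) \cdot \frac{n^{m - 1/2} M^m}{2^m (m-1)!}.$$

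For the lower bound on $\sigma$, let $N = 2^n - \sum_{j < m} \binom{n}{j} = (1 + o(1)) \cdot 2^n$ denote the number of subsets of size at least $m$; by hypothesis their $X$-values form $N$ distinct elements of $\mathbb{Z}$ (respectively reals with pairwise gaps $\geq 1$). Conditional on $Y := \mathbf{1}[|A| \geq m]$, the distribution of $X$ given $Y = 1$ is uniform on these $N$ well-separated values, and for $N$ reals spaced by at least $1$ the variance is at least $(N^2 - 1)/12$ (the minimum is attained by an arithmetic progression of unit gap, via the identity $\sum_{i<j}(v_i - v_j)^2 \geq \sum_{i<j}(j-i)^2 = N^2(N^2-1)/12$). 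Since $\mathrm{Var}(X \mid Y = 0) = 0$, the law of total variance yields
$$\sigma^2 \geq \mathbb{P}(|A| \geq m) \cdot \frac{N^2 - 1}{12} = (1 + o(1)) \cdot \frac{4^n}{12},$$
so $\sigma \geq (1 + o(1)) \cdot 2^n/(2\sqrt{3})$.

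Combining the two bounds gives $M^m \geq (1 + o(1)) \cdot 2^{m-1} (m-1)! \cdot 2^n/(\sqrt{3}\, n^{m-1/2})$, and extracting the $m$-th root reproduces the constant in the theorem. I expect the delicate point to be the lower bound on $\sigma$: a naive Chebyshev argument would yield only $\sigma \geq 2^n/(3\sqrt{3})(1+o(1))$, losing a factor of $(3/2)^{1/m}$ in $M$. It is thus essential to exploit the full ``minimum variance of a distribution supported on $N$ well-separated points'' principle rather than a mere Chebyshev tail bound, and to handle carefully the contribution of the sets of size less than $m$ (which all map to $0$) through the law of total variance.
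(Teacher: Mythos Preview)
Your proof is correct and follows essentially the same variance-method approach as the paper: the upper bound on $\sigma^2$ via the cancellation of the top-degree ($j=0$) contribution and the dominance of the $j=1$ term matches the paper's computation that $C_0=0$ and $C_1=2^{n-2m}\binom{2m-2}{m-1}$, and your lower bound $\sigma^2\ge (1+o(1))4^n/12$ via the law of total variance coincides with the paper's packing argument $2^n\sigma^2\ge 2\sum_{i\le N/2} i^2$. Your covariance decomposition by $j=|S\cap T|$ is a slightly cleaner way to expose the cancellation than the paper's expansion of $(e^m_\Sigma(A)-\mu)^2$ with explicit coefficients $C_0,\dots,C_m$, but the content is the same.
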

\proof
Let $\Sigma=(a_1,\ldots,a_n)$ be an $m$-th evaluation distinct sequence of integers (resp. real numbers). Pick a subset $A$ uniformly at random from $2^{[1,n]}$ and define the real random variable $X = e_\Sigma^m(A)$.
We denote by $\mu \coloneqq \mathbb{E}[X]$ and $\sigma^2 \coloneqq \mathbb{E}[X^2] - \mu^2$ respectively the expected value and the variance of the random variable $X$.

We have that:
$$\mu=\sum_{A\subseteq [1,n]:\ |A|\geq m} \frac{e^m_{\Sigma}(A)}{2^n}.$$
The term $a_{i_1}\dots a_{i_m}$ appears in the evaluation $e^m_{\Sigma}(A)$ precisely when $A$ contains $\{i_1,\ldots,$ $i_m\}$; this happens for $2^{n-m}$ subsets of $[1,n]$.
Therefore, we have that:
$$\mu= \frac{2^{n-m}}{2^n}\sum_{\substack{i_1< i_2< \ldots<i_m\\ i_1,\ldots, i_m\in [1,n]}} a_{i_1}\dots a_{i_m}=\frac{e_{\Sigma}^m([1,n])}{2^m}.$$
By definition of variance, we have that:
$$
2^n\sigma^2=\sum_{A\subseteq [1,n]} (e^m_{\Sigma}(A)-\mu)^2=\sum_{A\subseteq [1,n]} \left( \sum_{\substack{i_1< i_2< \ldots<i_m\\ i_1,\ldots, i_m\in A}} a_{i_1}\dots a_{i_m}- \sum_{\substack{i_1< i_2< \ldots<i_m\\ i_1,\ldots, i_m\in [1,n]}} \frac{a_{i_1}\dots a_{i_m}}{2^m}\right)^2.$$

Thanks to the symmetry of $e^m_{\Sigma}$, there exist coefficients $C_0,\ldots,C_m$ such that the latter sum can be written as follows:

\begin{equation}\label{coefficients}C_0\sum_{\substack{i_1< i_2< \ldots<i_{2m}\\ i_1,\ldots, i_{2m}\in [1,n]}} a_{i_1}\ldots a_{i_{2m}}+C_1\sum_{\substack{i_1< i_2< \ldots<i_{2m-1}\\ i_1,\ldots, i_{2m-1}\in [1,n]}} \sum_{\ell\in [1,2m-1]}a_{i_1}a_{i_2}\ldots a_{i_\ell}^2\dots a_{i_{2m-1}}+ \end{equation}
\begin{equation*}+\ldots+C_{m}\sum_{\substack{i_1< i_2< \ldots<i_{m}\\ i_1,\ldots, i_{m}\in [1,n]}}a_{i_1}^2\dots a_{i_m}^2.\end{equation*}

We now claim that:

\begin{itemize}
\item[(\emph{a})] $C_0=0$;
\item[(\emph{b})] $C_1=2^{n-2m}{2m-2\choose m-1}$;
\item[(\emph{c})] $C_k=O(2^n)$ for every $k\in \{2,\ldots,m\}$.
\end{itemize}

\noindent $(a)$ The coefficient of $a_{i_1}\dots a_{i_{2m}}$ is $ {2m\choose m}$ times that obtained by taking the term $a_{i_1}\dots a_{i_m}$ from the first $(e^m_{\Sigma}(A)-\mu)$ in the product and $a_{i_{m+1}}\dots a_{i_{2m}}$ from the second one.
Here we note that we can choose the monomial $1\cdot a_{i_1}\dots a_{i_m}$ in the first $(e^m_{\Sigma}(A)-\mu)$ and $1\cdot a_{i_{m+1}}\dots a_{i_{2m}}$ in the second one from any set $A$ that contains $\{i_1,\ldots,i_{2m}\}$, i.e.\ in $2^{n-2m}$ possible ways. Similarly, we can choose the monomial $2^{-m}\cdot a_{i_1}\dots a_{i_m}$ in the first $(e^m_{\Sigma}(A)-\mu)$ and $2^{-m}\cdot a_{i_{m+1}}\dots a_{i_{2m}}$ in the second one from any set, i.e.\ in $2^n$ possible ways. Finally, we can choose the monomial $1\cdot a_{i_1}\dots a_{i_m}$ in the first $(e^m_{\Sigma}(A)-\mu)$ and $2^{-m}\cdot a_{i_{m+1}}\dots a_{i_{2m}}$ in $(e^m_{\Sigma}(A)-\mu)$ in the second one from any set that contains $\{i_{m+1},\ldots,i_{2m}\}$, i.e. in $2^{n-m}$ possible ways.
Thus, we have that
$$\frac{C_0}{{2m\choose m}}=2^{n-2m}-2\cdot 2^{n-m}\cdot 2^{-m}+2^n\cdot(2^{-m})^2=0.$$

\noindent $(b)$ The coefficient of $a_{i_1}^2\dots a_{i_2}\dots a_{i_{2m-1}}$ is $ {2m-2\choose m-1}$ times that obtained taking the term $a_{i_1}\dots a_{i_m}$ from the first $(e^m_{\Sigma}(A)-\mu)$ in the product and $a_{i_1} a_{i_{m+1}}\dots a_{i_{2m-1}}$ from the second one. Symmetrically, the same is true for every term $a_{i_1}\dots a_{i_\ell}^2\dots a_{i_{2m-1}}$. Hence, arguing as in $(a)$, we get:
$$\frac{C_1}{{2m-2\choose m-1}}=2^{n-2m+1}-2\cdot 2^{n-m}\cdot 2^{-m}+2^n\cdot(2^{-m})^2=2^{n-2m}.$$

\noindent $(c)$ The coefficient of $a_{i_1}^2\dots a_{i_k}^2 a_{i_{k+1}}\dots a_{i_{2m-k}}$ is $ {2m-2k\choose m-k}$ times that obtained taking the term $a_{i_1}\dots a_{i_m}$ from the first $(e^m_{\Sigma}(A)-\mu)$ in the product and $a_{i_1}\dots a_{i_k} a_{i_{m+1}}\dots a_{i_{2m-k}}$ from the second one. Again by symmetry and reasoning as in $(a)$, we get:
$$\frac{C_k}{{2m-2k\choose m-k}}=2^{n-2m+k}-2\cdot 2^{n-m}\cdot 2^{-m}+2^n\cdot(2^{-m})^2=O(2^{n}).$$

Summing up, we can rewrite \eqref{coefficients} as
\begin{equation}\label{coefficient2} 2^n\sigma^2=C_1\sum_{\substack{i_1< i_2< \ldots<i_{2m-1}\\ i_1,\ldots, i_{2m-1}\in [1,n]}} \sum_{\ell\in [1,2m-1]}a_{i_1}a_{i_2}\ldots a_{i_\ell}^2\dots a_{i_{2m-1}}+\end{equation}
\begin{equation*}+O(2^{n})\left(\sum_{k=2}^m \sum_{\substack{i_1< i_2< \ldots<i_{2m-k}\\ i_1,\ldots, i_{2m-k}\in [1,n]}}\sum_{\substack{\ell_1<\ldots<\ell_k\\ \ell_1,\ldots,\ell_k\in [1,2m-k]}} a_{i_1}a_{i_2}\dots a_{i_{\ell_1}}^2\dots a_{i_{\ell_k}}^2\dots a_{i_{2m-k}}\right).\end{equation*}

In \eqref{coefficient2}, for each value of $k$, we have a sum of ${n\choose 2m-k}\cdot {2m-k\choose k}<\frac{n^{2m-k}}{(2m-2k)!k!}$ terms. Since $M$ is larger than any element of the sequence, we get:
\begin{align}\label{coefficient3A}
2^n\sigma^2&<\frac{n^{2m-1}}{(2m-2)!}{2m-2\choose m-1}2^{n-2m} M^{2m}+O(2^{n}n^{2m-2})M^{2m} \\ &=(1+o(1))\cdot \left(\frac{n^{2m-1}}{((m-1)!)^2}2^{n-2m}M^{2m}\right). \nonumber\end{align}

On the other hand, for $|A|\geq m$, the evaluations $e^m_{\Sigma}(A)$ are all different and spaced at least by one, and hence we have that $(e^m_{\Sigma}(A)-\mu)^2$ assumes at least $\frac{1}{2}(2^n-\sum_{i=0}^{m-1} {n\choose i})$ different values. Since the sum $\sum_{A\subseteq [1,n]} (e^m_{\Sigma}(A)-\mu)^2$ is minimized when the values are around $\mu$ and are spaced by one, we obtain the lower bound:
\begin{equation}\label{lower1}
\frac{1+o(1)}{12}2^{3n}=2\sum_{i=0}^{\frac{1}{2}(2^n-\sum_{i=0}^{m-1} {n\choose i})} i^2\leq 2^n\sigma^2.
\end{equation}
To conclude the proof, it is enough to compare \eqref{coefficient3A} and \eqref{lower1}.
\endproof

\section{Lower Bounds via Variance Method: A Symmetry Argument}

Also in Problem \ref{SmallSizeLambda} one can provide a first lower bound on $M$ using the pigeonhole principle. Indeed, the number of non-zero evaluations of $e^m_{\Sigma}$ is $|\mathcal{F}_{\lambda,n}|-\sum_{i=0}^{m-1} {n \choose i}=(1+o(1))|\mathcal{F}_{\lambda,n}|$, these evaluations are spaced at least by one, and each of these is smaller in each coordinate than ${\lambda n\choose m} M^m\leq \frac{(\lambda n)^m}{m!} M^m$. It follows that:
$$ M>D_{k,m,\lambda}\frac{(1+o(1))|\mathcal{F}_{\lambda,n}|^{1/(km)}}{n}.$$
In this section we show how, using the variance method, one can improve the denominator $n$ in this lower bound. First of all, we consider the $1$-dimensional sequence $(1,1,\ldots,1)$ and we upper-bound its variance. Then with a symmetry argument (see \eqref{coefficients4a} below), we reduce the general case to this one.

In the next three lemmas, we let $\Sigma$ be the sequence of $n$ integers $(1,\ldots,1)$. For a subset $A$ chosen uniformly at random from $\mathcal{F}_{\lambda,n}$, we define the real random variable $\bar{X} = e_\Sigma^m(A)$. Notice that if $|A|=i$, then $e_{\Sigma}^m(A)={i \choose m}$. Finally, we let $\bar{\mu}\coloneqq\mathbb{E}[\bar{X}]$ and $\bar{\sigma}^2 \coloneqq \mathbb{E}[\bar{X}^2] - \bar{\mu}^2$. Notice that:

\begin{equation}\label{expected1}
	|F_{\lambda,n}|\bar{\mu}=\sum_{i=0}^{\lambda n}{n\choose i}{i \choose m}.
\end{equation}
We will also use the fact that, if ${z \choose m}$ is seen as a polynomial in the variable $z$, then:

\begin{equation}\label{binom_as_poly}
	{z\choose m}=\frac{z^m-(\sum_{i=1}^{m-1} i)z^{m-1}+o(z^{m-1})}{m!}.
\end{equation}

\begin{Lemma}\label{VarianceSigma111a}
If $n$ is large enough and $\lambda<1/2$ we have that:
$$\bar{\sigma}^2\leq \frac{\sqrt{n}(\lambda n)^{2m-2}(1+o(1))}{((m-1)!)^2}$$
\end{Lemma}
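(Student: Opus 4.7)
The plan is: since $\Sigma=(1,\ldots,1)$, the random variable $\bar{X}=\binom{|A|}{m}$ depends on $A$ only through its cardinality, so the variance computation collapses to understanding the distribution of $|A|$ under the uniform measure on $\mathcal{F}_{\lambda,n}$. Starting from the elementary fact that $\bar{\sigma}^2\leq \mathbb{E}[(\bar{X}-c)^2]$ for any constant $c$, I would pick $c=\binom{\lfloor\lambda n\rfloor}{m}$: since $\binom{i}{m}$ is non-decreasing in $i$ and $|A|\leq\lfloor\lambda n\rfloor$, this is the largest value $\bar{X}$ can attain, which makes it a convenient anchor.

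To control the deviation $c-\binom{i}{m}$, I would telescope via the Pascal identity $\binom{j+1}{m}-\binom{j}{m}=\binom{j}{m-1}$:
$$c-\binom{i}{m}=\sum_{j=i}^{\lfloor\lambda n\rfloor-1}\binom{j}{m-1}\leq(\lfloor\lambda n\rfloor-i)\binom{\lfloor\lambda n\rfloor}{m-1}\leq(\lfloor\lambda n\rfloor-i)\frac{(\lambda n)^{m-1}}{(m-1)!},$$
the last inequality using~\eqref{binom_as_poly}. Squaring and taking expectation then yields
$$\bar{\sigma}^2\leq\frac{(\lambda n)^{2m-2}}{((m-1)!)^2}\,\mathbb{E}\!\left[(\lfloor\lambda n\rfloor-|A|)^2\right].$$

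It remains to bound $\mathbb{E}[(\lfloor\lambda n\rfloor-|A|)^2]$ by any quantity dominated by $\sqrt{n}(1+o(1))$; in fact I expect it to be bounded by a constant depending only on $\lambda$. Since $|A|$ has distribution proportional to $\binom{n}{i}$ on $\{0,\ldots,\lfloor\lambda n\rfloor\}$, and the ratio $\binom{n}{i-1}/\binom{n}{i}=i/(n-i+1)$ is increasing in $i$, its maximum on the relevant range is attained at $i=\lfloor\lambda n\rfloor$ and tends to $\lambda/(1-\lambda)<1$ as $n\to\infty$. Hence for $n$ large enough there exists $r<1$ with $\binom{n}{\lfloor\lambda n\rfloor-j}\leq r^j\binom{n}{\lfloor\lambda n\rfloor}$ for every $0\leq j\leq\lfloor\lambda n\rfloor$. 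Combining the trivial lower bound $|\mathcal{F}_{\lambda,n}|\geq\binom{n}{\lfloor\lambda n\rfloor}$ with the convergent series $\sum_{j\geq 0}j^2r^j$ produces the desired uniform (in $n$) bound $C_\lambda<\infty$.

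The only real subtlety is making sure the geometric decay holds \emph{uniformly} across the whole range $0\leq i\leq \lfloor\lambda n\rfloor$, not merely locally near $\lfloor\lambda n\rfloor$; this is precisely where the hypothesis $\lambda<1/2$ is used, and the monotonicity of $i/(n-i+1)$ in $i$ makes the verification immediate. Everything else is a routine manipulation, with the bookkeeping of the $(1+o(1))$ factor coming from the fact that $\lambda/(1-\lambda)+o(1)<1$ for $n$ sufficiently large and that $\binom{\lfloor\lambda n\rfloor}{m-1}/((\lambda n)^{m-1}/(m-1)!)\to 1$ by~\eqref{binom_as_poly}.
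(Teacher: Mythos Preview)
Your argument is correct and in fact yields a sharper bound than the lemma claims: you obtain $\bar{\sigma}^2\leq C_\lambda\,(\lambda n)^{2m-2}/((m-1)!)^2$ with $C_\lambda$ a constant depending only on $\lambda$, whereas the statement only asks for an extra factor of $\sqrt{n}$. The paper takes a different route to the same (weaker) conclusion: it also centers at $\binom{\lambda n}{m}$ and also exploits the geometric decay $\binom{n}{i-1}/\binom{n}{i}\leq\gamma\coloneqq\lambda/(1-\lambda)<1$, but instead of your uniform linear bound $\binom{\lambda n}{m}-\binom{i}{m}\leq(\lfloor\lambda n\rfloor-i)\,(\lambda n)^{m-1}/(m-1)!$ it splits the range of $i$ at $\lambda n-\sqrt[4]{n}$. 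In the near range it uses the asymptotic difference bound $|\binom{i}{m}-\binom{\lambda n}{m}|\lesssim \sqrt[4]{n}\,(\lambda n)^{m-1}/(m-1)!$, producing the main term $\sqrt{n}\,(\lambda n)^{2m-2}/((m-1)!)^2$; in the far range it uses only the crude bound $(\binom{i}{m}-\binom{\lambda n}{m})^2<n^{2m}$ together with $\gamma^{\sqrt[4]{n}}<1/n^2$ to make that piece negligible. Your telescoping via Pascal's identity avoids the artificial cut point entirely and feeds the factor $(\lfloor\lambda n\rfloor-i)^2$ directly into the convergent series $\sum_j j^2 r^j$, which is why you save the $\sqrt{n}$. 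One minor quibble: the inequality $\binom{\lfloor\lambda n\rfloor}{m-1}\leq(\lambda n)^{m-1}/(m-1)!$ is just the elementary bound $\binom{k}{m-1}\leq k^{m-1}/(m-1)!$ and does not need~\eqref{binom_as_poly}; likewise, since your final bound is $O(n^{2m-2})$, the $(1+o(1))$ bookkeeping you mention at the end is unnecessary---your constant $C_\lambda$ is eventually dominated by $\sqrt{n}$ outright.
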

\proof
Using \eqref{expected1} and the definition of variance we get:
\begin{align}
\nonumber
|\mathcal{F}_{\lambda,n}|\bar{\sigma}^2 & =\sum_{A\in \mathcal{F}_{\lambda,n}}\left( e_\Sigma^m(A) -\bar{\mu}\right)^2 =\sum_{i=0}^{\lambda n}{n\choose i}\left( {i \choose m}-{\lambda n\choose m} -\left(\bar{\mu}-{\lambda n\choose m}\right)\right)^2 \\ \nonumber
&=\left(\sum_{i=0}^{\lambda n}{n\choose i}\left( {i \choose m}-{\lambda n\choose m}\right)^2\right)-\left(\bar{\mu}-{\lambda n\choose m}\right)^2|\mathcal{F}_{\lambda,n}| \\ \label{VarianceLambda1} & <\sum_{i=0}^{\lambda n}{n\choose i}\left( {i \choose m}-{\lambda n\choose m}\right)^2.
\end{align}
Now using \eqref{binom_as_poly}, together with the fact that $i<\lambda n$, we get that if $|i-\lambda n|<\sqrt[4]{n}$ then:
$$\left|{i\choose m}-{\lambda n\choose m}\right|=\frac{\sqrt[4]{n}m(\lambda n)^{m-1}(1+o(1))}{m!}=\frac{\sqrt[4]{n}(\lambda n)^{m-1}(1+o(1))}{(m-1)!},$$
and hence \eqref{VarianceLambda1} becomes
\begin{align}
\nonumber
|\mathcal{F}_{\lambda,n}|\bar{\sigma}^2 &< \sum_{i=0}^{\lambda n-\sqrt[4]{n}}{n\choose i}\left( {i \choose m}-{\lambda n\choose m}\right)^2+\sum_{i=\lambda n-\sqrt[4]{n}+1}^{\lambda n}{n\choose i}\left( {i \choose m}-{\lambda n\choose m}\right)^2 \\ \label{VarianceLambda2} &<\sum_{i=0}^{\lambda n-\sqrt[4]{n}}{n\choose i}\left( {i \choose m}-{\lambda n\choose m}\right)^2+\frac{\sqrt{n}(\lambda n)^{2m-2}(1+o(1))}{((m-1)!)^2}|\mathcal{F}_{\lambda,n}|.
\end{align}
Next, we notice that:
\begin{multline*} \sum_{i=0}^{\lambda n-\sqrt[4]{n}}{n\choose i}\left( {i \choose m}-{\lambda n\choose m}\right)^2<n^{2m}\sum_{i=\sqrt[4]{n}}^{\lambda n}{n\choose i-\sqrt[4]{n}} \\ =n^{2m} \sum_{i=\sqrt[4]{n}}^{\lambda n} {n \choose i}\frac{i (i-1)\cdots (i-\sqrt[4]{n}+2)(i-\sqrt[4]{n}+1)}{(n-i+1)(n-i+2)\cdots (n-(i-\sqrt[4]{n}))}.
 \end{multline*}
Since for $i,j\leq \lambda n$ and $\lambda<1/2$ we have that $\frac{i-j}{(n+1)-(i-j)}\leq \frac{\lambda n}{n+1-\lambda n}<\frac{\lambda }{1-\lambda }\eqqcolon\gamma<1$, it follows from the above computation that:
$$ \sum_{i=0}^{\lambda n-\sqrt[4]{n}}{n\choose i}\left( {i \choose m}-{\lambda n\choose m}\right)^2< n^{2m} |\mathcal{F}_{\lambda,n}|(\gamma)^{\sqrt[4]{n}}<n^{2m-2}|\mathcal{F}_{\lambda,n}|,$$
where in the second inequality we used the fact that for $n$ large enough $(\gamma)^{\sqrt[4]{n}}<\frac{1}{n^2}$. Combining this with \eqref{VarianceLambda2} we get that:
$$\bar{\sigma}^2< \frac{\sqrt{n}(\lambda n)^{2m-2}(1+o(1))}{((m-1)!)^2}.$$
\endproof

\begin{Lemma}\label{VarianceSigma111b}
If $n$ is large enough and $\lambda> 1/2$ we have that:
$$\bar{\sigma}^2\leq
\frac{n^{2m-1}(1+o(1))}{(m!)^2}$$
\end{Lemma}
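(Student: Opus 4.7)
The approach mirrors that of Lemma~\ref{VarianceSigma111a}, but with a different centering. For $\lambda>1/2$ the cutoff $\lambda n$ lies in the far upper tail of the size distribution of a uniformly random $A\in\mathcal{F}_{\lambda,n}$: by a Hoeffding/Chernoff bound for $\mathrm{Bin}(n,1/2)$ one has $|\mathcal{F}_{\lambda,n}|=(1-o(1))\cdot 2^n$ and, more precisely, $\sum_{|i-n/2|>n^{3/4}}\binom{n}{i}\le 2^n\cdot e^{-\Omega(\sqrt{n})}$. So the ``typical'' size of $A\in\mathcal{F}_{\lambda,n}$ is $n/2$, not $\lambda n$, and I would center the second moment at $\binom{n/2}{m}$ rather than at $\binom{\lambda n}{m}$.

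Using the fact that the variance is minimized by centering at the mean, one has $\bar{\sigma}^2\le \mathbb{E}\bigl[(\bar{X}-\binom{n/2}{m})^2\bigr]$, i.e.\
$$|\mathcal{F}_{\lambda,n}|\,\bar{\sigma}^2\;\le\;\sum_{i=0}^{\lambda n}\binom{n}{i}\Bigl(\binom{i}{m}-\binom{n/2}{m}\Bigr)^2,$$
and I would split this sum into a bulk $|i-n/2|\le n^{3/4}$ and a tail $|i-n/2|>n^{3/4}$. For the tail, the crude bound $\bigl|\binom{i}{m}-\binom{n/2}{m}\bigr|=O(n^m)$ multiplied by the exponentially small tail weight contributes $o(1)$ once divided by $|\mathcal{F}_{\lambda,n}|$. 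For the bulk, the mean value theorem applied to the polynomial $\binom{z}{m}$, combined with $\tfrac{d}{dz}\binom{z}{m}=\tfrac{z^{m-1}}{(m-1)!}+O(z^{m-2})$ (which follows from \eqref{binom_as_poly}), yields uniformly in the range $|\xi-n/2|\le n^{3/4}$
$$\Bigl|\binom{i}{m}-\binom{n/2}{m}\Bigr|\;\le\;\frac{(n/2)^{m-1}}{(m-1)!}\bigl(1+o(1)\bigr)\,|i-n/2|.$$

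Plugging this bound back, extending the bulk sum to all $i\in[0,n]$ (only increasing it), and using the standard identity $\sum_{i=0}^n\binom{n}{i}(i-n/2)^2=2^n\cdot n/4$ together with $|\mathcal{F}_{\lambda,n}|=(1-o(1))\cdot 2^n$, one arrives at
$$\bar{\sigma}^2\;\le\;\frac{(n/2)^{2m-2}}{((m-1)!)^2}\cdot\frac{n}{4}(1+o(1))\;=\;\frac{n^{2m-1}(1+o(1))}{4^m((m-1)!)^2}\;\le\;\frac{n^{2m-1}(1+o(1))}{(m!)^2},$$
where the last inequality is the elementary $m^2\le 4^m$. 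The main technical point requiring care is ensuring that the mean-value-theorem estimate is uniform with the stated $(1+o(1))$ factor across a window of width $n^{3/4}$ around $n/2$; this follows directly from \eqref{binom_as_poly}, but it is the only step where the constants have to be tracked explicitly.
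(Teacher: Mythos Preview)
Your argument is correct and follows essentially the same approach as the paper: center at $\binom{n/2}{m}$, reduce to $\sum_i\binom{n}{i}(i-n/2)^2=n\,2^{n-2}$, and use $|\mathcal{F}_{\lambda,n}|=(1+o(1))2^n$ for $\lambda>1/2$. The only difference is tactical: the paper bounds $\bigl|\binom{i}{m}-\binom{n/2}{m}\bigr|\le |i-n/2|\cdot\frac{2n^{m-1}(1+o(1))}{m!}$ globally over $i\in[0,n]$, whereas your bulk/tail split with the mean value theorem near $n/2$ actually yields the sharper constant $\frac{n^{2m-1}(1+o(1))}{4^m((m-1)!)^2}$ (matching the $\lambda=1/2$ bound of Lemma~\ref{VarianceSigma111c}) before you discard it via $m^2\le 4^m$.
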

\proof
Analogously to the proof of Lemma \ref{VarianceSigma111a} we have:
\begin{align} \nonumber |\mathcal{F}_{\lambda,n}|\bar{\sigma}^2 &= \sum_{A\in \mathcal{F}_{\lambda,n}}\left( e_\Sigma^m(A) -\bar{\mu}\right)^2 = \sum_{i=0}^{\lambda n}{n\choose i}\left( {i \choose m}-{n/2\choose m} -\left(\bar{\mu}-{n/2\choose m}\right)\right)^2 \\ \nonumber &=\left(\sum_{i=0}^{\lambda n}{n\choose i}\left( {i \choose m}-{n/2\choose m}\right)^2\right)-\left(\bar{\mu}-{n/2\choose m}\right)^2|\mathcal{F}_{\lambda,n}| \\ \label{VarianceBigLambda1} &<\sum_{i=0}^{\lambda n}{n\choose i}\left( {i \choose m}-{n/2\choose m}\right)^2.
\end{align}
Now using \eqref{binom_as_poly}, together with the fact that $i\le n$, we get:
\begin{align*} \left|{i \choose m}-{n/2\choose m}\right| &=\left|i-n/2\right|\left(\frac{(n/2)^{m-1}+(n/2)^{m-2}i+\ldots+i^{m-1} +o(n^{m-1})}{m!}\right) \\ &<\left|i-n/2\right|\left(\frac{n^{m-1}((1/2)^{m-1}+(1/2)^{m-2}+\ldots+1 +o(1))}{m!}\right) \\
&<\left|i-n/2\right|\left(\frac{2n^{m-1}(1+o(1))}{m!}\right), 
\end{align*}
and hence by \eqref{VarianceBigLambda1} we deduce:
\begin{equation}\label{VarianceBigLambda2}|\mathcal{F}_{\lambda,n}|\bar{\sigma}^2<\left(\sum_{i=0}^{\lambda n}{n\choose i}\left(i-n/2\right)^2\right)\left(\frac{2n^{m-1}(1+o(1))}{m!}\right)^2.\end{equation}
Now notice that:
$$\sum_{i=0}^{\lambda n}{n\choose i}\left(i-n/2\right)^2<\sum_{i=0}^{n}{n\choose i}\left(i-n/2\right)^2,$$
and hence setting $A\coloneqq\sum_{i=0}^{n}{n\choose i}i$ and $B\coloneqq \sum_{i=0}^{ n}{n\choose i}i^2$ we have:
$$\sum_{i=0}^{n}{n\choose i}\left(i-n/2\right)^2=-nA+B+\frac{n^2}{4}\sum_{i=0}^{n}{n\choose i}=-nA+B+n^22^{n-2}.$$
Since
$$A=\sum_{i=0}^{n}{n\choose i}i=n\sum_{i=0}^{ n}{n-1\choose i-1}=n2^{n-1}$$
and
\begin{align*} &\qquad\qquad B=\sum_{i=0}^{n}{n\choose i}i^2= n\sum_{i=0}^{ n}{n-1\choose i-1}i=n\sum_{i=0}^{ n}{n-1\choose i-1}(i-1+1)= \\ &n(n-1)\sum_{i=0}^{ n}{n-2\choose i-2}+n\sum_{i=0}^{ n}{n-1\choose i-1}=n(n-1)2^{n-2}+n2^{n-1}=n(n+1)2^{n-2}, \end{align*}
it follows that
$$-nA+B+n^22^{n-2}=n2^{n-2}.$$
Therefore, we can deduce from \eqref{VarianceBigLambda2} that:
\begin{equation*}|\mathcal{F}_{\lambda,n}|\bar{\sigma}^2<n2^{n-2}\left(\frac{2n^{m-1}(1+o(1))}{m!}\right)^2.\end{equation*}
Since for $\lambda>1/2$ we have $|\mathcal{F}_{\lambda,n}|=2^{n}(1+o(1))$, the claim follows.
\endproof
\begin{Lemma}\label{VarianceSigma111c}
If $n$ is large enough and $\lambda=1/2$ we have that:
$$\bar{\sigma}^2\leq
\frac{n^{2m-1}(1+o(1))}{2^{2m}((m-1)!)^2}$$
\end{Lemma}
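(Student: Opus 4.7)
The plan is to mimic the structure of Lemmas \ref{VarianceSigma111a} and \ref{VarianceSigma111b}, while exploiting the extra symmetry that the boundary $\lambda n = n/2$ provides. As in those proofs, I would start from the identity
$$|\mathcal{F}_{1/2,n}|\bar{\sigma}^2 = \sum_{A\in \mathcal{F}_{1/2,n}} (e_\Sigma^m(A)-\bar{\mu})^2,$$
use the freedom to recenter, and discard the non-positive cross term to obtain
$$|\mathcal{F}_{1/2,n}|\bar{\sigma}^2 < \sum_{i=0}^{n/2}\binom{n}{i}\left(\binom{i}{m}-\binom{n/2}{m}\right)^2.$$
This is the same recentering used in Lemma \ref{VarianceSigma111b}, and it is the right choice here because the binomial weights $\binom{n}{i}$ are concentrated near $i=n/2$.

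Next I would bound $|\binom{i}{m}-\binom{n/2}{m}|$ using \eqref{binom_as_poly}. The key point, which is what sharpens the constant compared to Lemma \ref{VarianceSigma111b}, is that now the summation range forces $i\le n/2$, so every term in the factorization
$$\binom{i}{m}-\binom{n/2}{m} = (i-n/2)\cdot\frac{(n/2)^{m-1}+(n/2)^{m-2}i+\cdots+i^{m-1}+o(n^{m-1})}{m!}$$
is bounded by $(n/2)^{m-1}$, yielding
$$\left|\binom{i}{m}-\binom{n/2}{m}\right| \le |i-n/2|\cdot\frac{(n/2)^{m-1}(1+o(1))}{(m-1)!}.$$
Squaring and substituting gives
$$|\mathcal{F}_{1/2,n}|\bar{\sigma}^2 < \frac{(n/2)^{2m-2}(1+o(1))}{((m-1)!)^2}\sum_{i=0}^{n/2}\binom{n}{i}(i-n/2)^2.$$

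Finally, I would evaluate the remaining second-moment sum by symmetry. Because $\binom{n}{i}=\binom{n}{n-i}$ and $(i-n/2)^2=(n-i-n/2)^2$, the contribution from $i\le n/2$ is essentially half of the full second moment computed in Lemma \ref{VarianceSigma111b}, namely
$$\sum_{i=0}^{n/2}\binom{n}{i}(i-n/2)^2 = \frac{1}{2}\cdot n 2^{n-2}(1+o(1)) = n 2^{n-3}(1+o(1)).$$
Together with $|\mathcal{F}_{1/2,n}|=2^{n-1}(1+o(1))$, this gives
$$\bar{\sigma}^2 < \frac{n 2^{n-3}}{2^{n-1}}\cdot\frac{(n/2)^{2m-2}(1+o(1))}{((m-1)!)^2} = \frac{n^{2m-1}(1+o(1))}{2^{2m}((m-1)!)^2},$$
which is the stated bound. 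I do not foresee a real obstacle: the only mildly delicate point is justifying that the contribution of the tiny middle term $i=n/2$ (which is counted once rather than twice) is absorbed in the $1+o(1)$ factor, but since $\binom{n}{n/2}(0)^2 = 0$, the symmetry decomposition is in fact exact on the interior and we lose nothing.
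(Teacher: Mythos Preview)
Your proposal is correct and follows essentially the same route as the paper's proof: recenter at $\binom{n/2}{m}$, use \eqref{binom_as_poly} together with $i\le n/2$ to get the sharp constant $\frac{(n/2)^{m-1}}{(m-1)!}$, halve the full second moment $n2^{n-2}$ by symmetry, and divide by $|\mathcal{F}_{1/2,n}|=2^{n-1}(1+o(1))$. The paper carries out exactly these four steps, in the same order and with the same estimates.
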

\proof
Arguing as at the beginning of the proof of Lemma \ref{VarianceSigma111b}, we get to:
\begin{equation}\label{VarianceIII}
	|\mathcal{F}_{\lambda,n}|\bar{\sigma}^2<\sum_{i=0}^{\lfloor n/2\rfloor}{n\choose i}\left( {i \choose m}-{n/2\choose m}\right)^2.
\end{equation}
Now using \eqref{binom_as_poly} together with the fact that $i \leq n/2$, we get that: 
\begin{align*} \left|{i \choose m}-{n/2\choose m}\right| &=\left|i-n/2\right|\left(\frac{(n/2)^{m-1}+(n/2)^{m-2}i+\ldots+i^{m-1} +o(n^{m-1})}{m!}\right) \\ &<\left|i-n/2\right|\left(\frac{n^{m-1}(1+o(1))}{2^{m-1}(m-1)!}\right).\end{align*}
Hence from \eqref{VarianceIII} we get:
\begin{equation}\label{VarianceBigLambda2b}|\mathcal{F}_{\lambda,n}|\bar{\sigma}^2<\left(\sum_{i=0}^{\lfloor n/2\rfloor}{n\choose i}\left(i-n/2\right)^2\right)\left(\frac{n^{m-1}(1+o(1))}{2^{m-1}(m-1)!}\right)^2.\end{equation}
Next, notice that:
$$\sum_{i=0}^{\lfloor n/2\rfloor}{n\choose i}\left(i-n/2\right)^2=\frac{1}{2}\sum_{i=0}^{n}{n\choose i}\left(i-n/2\right)^2=n2^{n-3}.$$
It follows from \eqref{VarianceBigLambda2b} that:
\begin{equation*}|\mathcal{F}_{1/2,n}|\bar{\sigma}^2<n2^{n-3}\left(\frac{n^{m-1}(1+o(1))}{2^{m-1}(m-1)!}\right)^2.\end{equation*}
Since for $\lambda=1/2$ we have $|\mathcal{F}_{1/2,n}|=2^{n-1}(1+o(1))$, the claim follows.
\endproof
Summing up Lemmas \ref{VarianceSigma111a}, \ref{VarianceSigma111b} and \ref{VarianceSigma111c}, we get the following corollary.
\begin{Corollary}\label{VarianceSigma111}
Let $\Sigma$ be the sequence of $n$ integers $(1,\ldots,1)$. Pick a subset $A$ uniformly at random from $\mathcal{F}_{\lambda,n}$ and define the real random variable $\bar{X} = e_\Sigma^m(A)$. Setting $\bar{\mu}\coloneqq\mathbb{E}[\bar{X}]$ and $\bar{\sigma}^2 \coloneqq \mathbb{E}[\bar{X}^2] - \bar{\mu}^2$, for a large enough $n$ we have that:
$$\bar{\sigma}^2\leq \begin{dcases} \frac{\sqrt{n}(\lambda n)^{2m-2}(1+o(1))}{((m-1)!)^2} & \mbox{if } \lambda<1/2\\
\frac{n^{2m-1}(1+o(1))}{2^{2m}((m-1)!)^2} & \mbox{if } \lambda=1/2\\
\frac{n^{2m-1}(1+o(1))}{(m!)^2} & \mbox{if } \lambda>1/2
\end{dcases}.$$
\end{Corollary}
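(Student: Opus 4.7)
The plan is a direct synthesis: Corollary \ref{VarianceSigma111} is nothing more than the union of the three preceding lemmas. So the proof I would write consists of a trichotomy on $\lambda \in (0,1]$. In the case $\lambda < 1/2$, invoke Lemma \ref{VarianceSigma111a}, which produces exactly the first branch of the displayed bound. In the case $\lambda = 1/2$, invoke Lemma \ref{VarianceSigma111c}, producing the middle branch. In the remaining case $\lambda > 1/2$ (which includes $\lambda = 1$), invoke Lemma \ref{VarianceSigma111b}, producing the third branch. Since the random variable $\bar{X} = e_\Sigma^m(A)$ and the quantities $\bar{\mu}$, $\bar{\sigma}^2$ of the corollary are defined in exactly the same way as in the three lemmas, the bounds transfer verbatim.

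The only thing to check is that the three ranges $\lambda < 1/2$, $\lambda = 1/2$, $\lambda > 1/2$ partition $(0,1]$, which is immediate. Consequently there is no genuine obstacle in the corollary itself; all of the technical work is already carried out in the three lemmas.

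If forced to identify a ``hard step'', it would not live in the corollary but rather in reconciling the three lemma bounds into a single coherent statement with consistent asymptotic normalization $(1+o(1))$. In particular, I would double-check that each lemma's asymptotic expression is written relative to the same implicit constants (the powers of $\lambda n$ versus $n$, and whether $(m-1)!$ or $m!$ appears in the denominator) so that the three branches of the \texttt{dcases} block are stated in the form actually proved. Once that bookkeeping is done, the proof is simply the sentence ``apply Lemma \ref{VarianceSigma111a}, Lemma \ref{VarianceSigma111c}, or Lemma \ref{VarianceSigma111b} according to the sign of $\lambda - 1/2$''.
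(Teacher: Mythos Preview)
Your proposal is correct and matches the paper's approach exactly: the paper itself introduces the corollary with the single line ``Summing up Lemmas \ref{VarianceSigma111a}, \ref{VarianceSigma111b} and \ref{VarianceSigma111c}, we get the following corollary'', with no further argument. Your trichotomy on $\lambda$ and the observation that the quantities $\bar{X},\bar{\mu},\bar{\sigma}^2$ are defined identically in the lemmas and the corollary is precisely the intended content.
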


Next, we are going to use Corollary \ref{VarianceSigma111} to deduce upper bounds on the variance for a generic $k$-dimensional sequence.

\begin{Lemma}\label{teo:multidimguy2a}
Let $\Sigma=(a_1,\ldots,a_n)$ be a sequence in $\mathbb{Z}^k$ that is $M$-bounded and such that, for all distinct $A_1,A_2\in \mathcal{F}_{\lambda,n}$ of size at least $m$,
$$e^m_{\Sigma}(A_1) \not= e^m_{\Sigma}(A_2).$$
Pick a subset $A$ uniformly at random from $\mathcal{F}_{\lambda,n}$ and define the real random variable $\bar{X} = e_\Sigma^m(A)$.
Setting $\mu\coloneqq\mathbb{E}[\bar{X}]$ and $\sigma^2\coloneqq\mathbb{E}[\bar{X}^2] - \bar{\mu}^2$, for a large enough $n$ we have that:
$$\sigma^2\leq\begin{dcases}\frac{2k(\lambda^{2m-1}+\lambda^{2m}) n^{2m-1}(1+o(1))}{((m-1)!)^2} M^{2m} & \mbox{if }\lambda< 1/2 \\
\frac{k(2^{2m+1}\lambda^{2m-1}+2^{2m+1}\lambda^{2m}+1) n^{2m-1}(1+o(1))}{2^{2m}((m-1)!)^2} M^{2m} & \mbox{if }\lambda= 1/2 \\
\frac{k(2m^2\lambda^{2m-1}+2m^2\lambda^{2m}+1) n^{2m-1}(1+o(1))}{(m!)^2} M^{2m} & \mbox{if }\lambda>1/2\end{dcases}.$$

\end{Lemma}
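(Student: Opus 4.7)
The first move is to split the vector-valued variance into coordinate contributions. Interpreting the formula $\sigma^{2}=\mathbb{E}[\bar X^{2}]-\mu^{2}$ as $\mathbb{E}\|\bar X\|^{2}-\|\mu\|^{2}$, we write $a_{i}=(a_{i,1},\ldots,a_{i,k})$ and let $\Sigma^{(j)}=(a_{1,j},\ldots,a_{n,j})$ be the $j$-th coordinate sequence. Then $e_{\Sigma}^{m}(A)=(e_{\Sigma^{(1)}}^{m}(A),\ldots,e_{\Sigma^{(k)}}^{m}(A))$ and
\[
\sigma^{2}=\sum_{j=1}^{k}\mathrm{Var}\bigl(e_{\Sigma^{(j)}}^{m}(A)\bigr).
\]
Each $\Sigma^{(j)}$ is a scalar $M$-bounded sequence, so the task reduces to the scalar case, with a factor of $k$ appearing at the end.

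For a single scalar sequence, I would expand
\[
|\mathcal{F}_{\lambda,n}|\,\mathrm{Var}(e_{\Sigma}^{m}(A))=\sum_{A\in\mathcal{F}_{\lambda,n}}\bigl(e_{\Sigma}^{m}(A)-\mu\bigr)^{2},
\]
and group the resulting degree-$2m$ monomials in $a_{1},\ldots,a_{n}$ by their \emph{type}, i.e.\ the multiset of exponents. Since $\mathcal{F}_{\lambda,n}$ is invariant under the action of $S_{n}$ on $[1,n]$, monomials of the same type receive the same coefficient, so the right-hand side takes the shape
\[
|\mathcal{F}_{\lambda,n}|\,\mathrm{Var}(e_{\Sigma}^{m}(A))=\sum_{r=0}^{m}C_{r}(\lambda,n,m)\,T_{r}(\Sigma),
\]
where $T_{r}(\Sigma)$ is the symmetric sum of monomials with exactly $r$ squared variables and $2m-2r$ distinct unsquared variables. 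This is the direct analogue of the expansion \eqref{coefficients} that drives the proof of Theorem \ref{teo:multidimguy}, but with $\mathcal{F}_{\lambda,n}$ in place of the full power set; it is the desired symmetry argument, and I would label the corresponding display as \eqref{coefficients4a}.

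Next, $M$-boundedness gives $T_{r}(\Sigma)\le M^{2m}N_{r}$, where $N_{r}:=T_{r}((1,\ldots,1))$ is the count of monomials of type $r$. Evaluating the identity at the all-ones sequence produces $|\mathcal{F}_{\lambda,n}|\bar{\sigma}^{2}=\sum_{r}C_{r}N_{r}$, exactly the quantity estimated in Corollary \ref{VarianceSigma111}. The plan is to isolate the dominant contribution, which will be the $r=1$ type (one squared variable, $2m-2$ distinct unsquared variables). Its coefficient $C_{1}$ is explicitly computable along the same lines as item $(b)$ of the proof of Theorem \ref{teo:multidimguy}, and turns out to be positive. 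The remaining types contribute lower-order terms: by a negative-correlation argument for $\mathcal{F}_{\lambda,n}$ (conditioning on $A$ containing a fixed $m$-set discourages it from containing another disjoint $m$-set), the $r=0$ coefficient is non-positive and controlled, while for $r\ge 2$ the counts $N_{r}$ are already smaller by powers of $n$, and bounding $|C_{r}|$ as in item $(c)$ of Theorem \ref{teo:multidimguy} makes these contributions absorbable into $1+o(1)$.

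The main obstacle will be the three-way case analysis for $\lambda<1/2$, $\lambda=1/2$, and $\lambda>1/2$, mirroring the three regimes of Corollary \ref{VarianceSigma111}. In particular, for $\lambda<1/2$ the size $|A|$ concentrates sharply around $\lambda n$, shrinking the counts $N_{r}$ for small $r$; this is exactly the phenomenon producing the $\sqrt{n}$ in Lemma \ref{VarianceSigma111a}, and it must be tracked through $C_{1}$ to yield the factor $(\lambda^{2m-1}+\lambda^{2m})$ in the first branch of the target bound. Once the $r=1$ contribution is bounded by $M^{2m}$ times the corresponding piece of Corollary \ref{VarianceSigma111} (with the correct constant depending on $m$ and $\lambda$), and the $r=0$ and $r\ge 2$ types are shown to contribute only lower-order corrections, the three stated inequalities follow directly, with the factor $k$ reinstated from the coordinate decomposition at the very end.
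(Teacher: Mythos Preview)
Your setup---coordinate decomposition, then expanding the scalar variance into symmetric type-sums $\sum_{r} C_r\, T_r(\Sigma)$ with $T_r$ the sum over monomials having exactly $r$ squared variables---is exactly the paper's. The divergence is in how the dominant $r=0$ term is handled, and here your proposal both misreads the paper and leaves a gap.

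In the paper, equation \eqref{coefficients4a} is \emph{not} the type expansion; it is what one obtains by evaluating that expansion at the all-ones sequence and inserting the bound $L$ of Corollary~\ref{VarianceSigma111}. Concretely, $\sum_r C_r N_r = |\mathcal{F}_{\lambda,n}|\,\bar\sigma^2 \le L$, so $|C_0|N_0 \le |C_1|N_1 + L + O(|\mathcal{F}_{\lambda,n}|\, n^{2m-2})$. Feeding this back into the crude estimate $|\mathcal{F}_{\lambda,n}|\,\sigma^2 \le \sum_r |C_r|\, N_r\, M^{2m}$ yields $|\mathcal{F}_{\lambda,n}|\,\sigma^2 \le (2|C_1|N_1 + L)M^{2m}(1+o(1))$. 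The paper then bounds $|C_1|\le (\lambda^{2m-1}+\lambda^{2m})\binom{2m-2}{m-1}|\mathcal{F}_{\lambda,n}|$ by estimating separately the two pieces of $C_1$ coming from $\mathbb{E}[X^2]$ and from $\mu^2$, and uses $N_1=(2m-1)\binom{n}{2m-1}$. The factor $2$ in the statement and the additive ``$+1$'' in the $\lambda\ge 1/2$ branches come precisely from this substitution step and from $L$; your outline produces neither, so it cannot arrive at the stated constants.

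Your alternative---discarding $C_0 T_0$ because $C_0\le 0$ by negative correlation in $\mathcal{F}_{\lambda,n}$---is a genuinely different mechanism. It requires proving $\Pr[S_1\cup S_2\subseteq A]\le \Pr[S_1\subseteq A]\Pr[S_2\subseteq A]$ for disjoint $m$-sets when $A$ is uniform on $\mathcal{F}_{\lambda,n}$, together with $C_1\ge 0$ so that $C_1 T_1\le C_1 N_1 M^{2m}$. Neither inequality is established in your sketch, and once you have them Corollary~\ref{VarianceSigma111} plays no role whatsoever (you would in fact get constants sharper than those stated). So as written, your proposal invokes Corollary~\ref{VarianceSigma111} without using it, and the step that replaces the paper's actual use of it---the two correlation inequalities---is asserted rather than proved.
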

\proof
We start from the case $k=1$. Notice that
\begin{equation}\label{expected}
\mu=\frac{\sum_{A\in \mathcal{F}_{\lambda,n}}\sum_{\substack{i_1< i_2< \ldots<i_m\\ i_1,\ldots, i_m\in A}} a_{i_1}\dots a_{i_m}}{|\mathcal{F}_{\lambda,n}|}.
\end{equation}

By definition of variance, we have that:
$$
|\mathcal{F}_{\lambda,n}|\sigma^2=\sum_{A\in \mathcal{F}_{\lambda,n}} (e^m_{\Sigma}(A)-\mu)^2=\sum_{A\in \mathcal{F}_{\lambda,n}} \left( \sum_{\substack{i_1< i_2< \ldots<i_m\\ i_1,\ldots, i_m\in A}} a_{i_1}\dots a_{i_m}\right)^2- \mu^2|\mathcal{F}_{\lambda,n}|.
$$
Let $C_0,\ldots, C_m$ be such that:

\begin{equation}\label{coefficients1}\sum_{A\in \mathcal{F}_{\lambda,n}} \left( \sum_{\substack{i_1< i_2< \ldots<i_m\\ i_1,\ldots, i_m\in A}} a_{i_1}\dots a_{i_m}\right)^2=C_0\sum_{\substack{i_1< i_2< \ldots<i_{2m}\\ i_1,\ldots, i_{2m}\in [1,n]}} a_{i_1}\ldots a_{i_{2m}}+ \end{equation}
\begin{equation*}+C_1\sum_{\substack{i_1< i_2< \ldots<i_{2m-1}\\ i_1,\ldots, i_{2m-1}\in [1,n]}} \sum_{\ell\in [1,2m-1]}a_{i_1}a_{i_2}\ldots a_{i_\ell}^2\dots a_{i_{2m-1}}+\ldots+C_{m}\sum_{\substack{i_1< i_2< \ldots<i_{m}\\ i_1,\ldots, i_{m}\in [1,n]}}a_{i_1}^2\dots a_{i_m}^2,\end{equation*}

and let $D_0,\ldots, D_m$ be such that:

\begin{equation}\label{coefficients2}\mu^2|\mathcal{F}_{\lambda,n}|=D_0\sum_{\substack{i_1< i_2< \ldots<i_{2m}\\ i_1,\ldots, i_{2m}\in [1,n]}} a_{i_1}\ldots a_{i_{2m}}+ \end{equation}
\begin{equation*}+D_1\sum_{\substack{i_1< i_2< \ldots<i_{2m-1}\\ i_1,\ldots, i_{2m-1}\in [1,n]}} \sum_{\ell\in [1,2m-1]}a_{i_1}a_{i_2}\ldots a_{i_\ell}^2\dots a_{i_{2m-1}}+\ldots+D_{m}\sum_{\substack{i_1< i_2< \ldots<i_{m}\\ i_1,\ldots, i_{m}\in [1,n]}}a_{i_1}^2\dots a_{i_m}^2.\end{equation*}

We now claim that:

\begin{itemize}
\item[(\emph{a})] $|C_1|\leq {2m-2\choose m-1} \lambda^{2m-1}|\mathcal{F}_{\lambda,n}|$;
\item[(\emph{b})] $|D_1|\leq {2m-2\choose m-1} \lambda^{2m}|\mathcal{F}_{\lambda,n}|$;
\item[(\emph{c})] $C_j=O(|\mathcal{F}_{\lambda,n}|)=D_j$ for every $j\in \{2,\ldots,m\}$.
\end{itemize}

\noindent $(a)$ The coefficient of $a_{i_1}^2\dots a_{i_2}\dots a_{i_{2m-1}}$ in \eqref{coefficients1} is $ {2m-2\choose m-1}$ times that obtained taking the term $a_{i_1}\dots a_{i_m}$ from the first $e^m_{\Sigma}(A)$ in the product and $a_{i_1} a_{i_{m+1}}\dots a_{i_{2m-1}}$ from the second one. Symmetrically, the same is true for every term $a_{i_1}\dots a_{i_\ell}^2\dots a_{i_{2m-1}}$. Since a set $A$ containing $i_1,\ldots,i_{2m-1}$ contains $|A|-(2m-1)$ further elements, we get:
$$\frac{C_1}{{2m-2\choose m-1}}=\sum_{i=2m-1}^{\lambda n} {n-(2m-1)\choose i-(2m-1)}=\sum_{i=2m-1}^{\lambda n} \frac{(n-(2m-1))!}{(i-(2m-1))!(n-i)!}.$$
Since $i\leq \lambda n$ and $\lambda\le 1$, we have that:
$$\frac{(n-(2m-1))!}{(i-(2m-1))!(n-i)!}= \frac{n!}{i!(n-i)!}\frac{i(i-1)\cdots (i-2m+2)}{n(n-1)\cdots (n-2m+2)}\leq {n\choose i}\lambda^{2m-1}.$$
It follows that:
$$\frac{C_1}{{2m-2\choose m-1}}\leq \sum_{i=2m-1}^{\lambda n} {n\choose i}\lambda^{2m-1}\leq |\mathcal{F}_{\lambda,n}|\lambda^{2m-1}.$$

\noindent $(b)$ Arguing as in point $(a)$ and using \eqref{expected}, we see that the coefficient of $a_{i_1}^2\dots a_{i_2}\dots a_{i_{2m-1}}$ in $\mu^2$ is $ {2m-2\choose m-1}$ times that obtained taking the term $a_{i_1}\dots a_{i_m}$ from the first $\mu$ in the product and $a_{i_1} a_{i_{m+1}}\dots a_{i_{2m-1}}$ from the second one. Symmetrically, the same is true for every term $a_{i_1}\dots a_{i_\ell}^2\dots a_{i_{2m-1}}$. Since a set $A$ containing $i_1,\ldots,i_m$ contains $|A|-m$ further elements, the term $a_{i_1}\dots a_{i_m}$ appears $\sum_{i=m}^{\lambda n} {n-m\choose i-m} $ times in the sum. Since $\frac{\sum_{i=m}^{\lambda n} {n-m\choose i-m} }{|\mathcal{F}_{\lambda,n}|}\leq \lambda^m$, proceeding as in point $(a)$ we get:
$$\frac{D_1}{|\mathcal{F}_{\lambda,n}|{2m-2\choose m-1}}\leq \lambda^{2m}.$$

\noindent $(c)$ The coefficient of $a_{i_1}^2\dots a_{i_j}^2 a_{i_{j+1}}\dots a_{i_{2m-j}}$ in \eqref{coefficients1} is $ {2m-2j\choose m-j}$ times that obtained taking the term $a_{i_1}\dots a_{i_m}$ from the first $e^m_{\Sigma}(A)$ in the product and $a_{i_1}\dots a_{i_j}$ $a_{i_{m+1}}\dots a_{i_{2m-j}}$ from the second one. Chosen these $2m-j$ terms, we need to add $|A|-(2m-j)$ terms to complete $A$. Reasoning as in $(a)$, we get:
$$\frac{C_j}{{2m-2j\choose m-j}}=\sum_{i=2m-1}^{\lambda n} {n-(2m-j)\choose i-(2m-j)}=O(|\mathcal{F}_{\lambda,n}|).$$
Arguing analogously, one sees that the coefficient of $a_{i_1}^2\dots a_{i_j}^2 a_{i_{j+1}}\dots a_{i_{2m-j}}$ in \eqref{coefficients2} is
$$\frac{D_j}{{2m-2j\choose m-j}}=O(|\mathcal{F}_{\lambda,n}|).$$

Claim $(c)$ allows us to write:
\begin{equation}\label{coefficient3} |\mathcal{F}_{\lambda,n}|\sigma^2=(C_0-D_0)\sum_{\substack{i_1< i_2< \ldots<i_{2m}\\ i_1,\ldots, i_{2m}\in [1,n]}} a_{i_1}\ldots a_{i_{2m}}+\end{equation}
\begin{equation*}+(C_1-D_1)\sum_{\substack{i_1< i_2< \ldots<i_{2m-1}\\ i_1,\ldots, i_{2m-1}\in [1,n]}} \sum_{\ell\in [1,2m-1]}a_{i_1}a_{i_2}\ldots a_{i_\ell}^2\dots a_{i_{2m-1}}+\end{equation*}
\begin{equation*}+O(|\mathcal{F}_{\lambda,n}|)\left(\sum_{j=2}^m \sum_{\substack{i_1< i_2< \ldots<i_{2m-j}\\ i_1,\dots, i_{2m-j}\in [1,n]}}\sum_{\substack{\ell_1<\ldots<\ell_j\\ \ell_1,\ldots,\ell_j\in [1,2m-j]}} a_{i_1}a_{i_2}\dots a_{i_{\ell_1}}^2\dots a_{i_{\ell_j}}^2\dots a_{i_{2m-j}}\right).\end{equation*}

Now we note that if each $a_i$ is $1$, then Corollary \ref{VarianceSigma111} provides us an upper bound on $\sigma^2$. Hence, treating \eqref{coefficient3} as a polynomial expression in the $a_i$'s and evaluating it at $(a_1,\ldots,a_n)=(1,\ldots,1)$, one obtains:
\begin{equation*} (C_0-D_0){n \choose 2m}+(C_1-D_1){n \choose 2m-1}(2m-1)+O(|\mathcal{F}_{\lambda,n}|n^{2m-2})\le L,\end{equation*}
	where $L$ is the upper bound provided by Corollary \ref{VarianceSigma111}. From now on, we assume $\lambda<1/2$. The remaining two cases are treated in an analogous way.
	
	For $\lambda<1/2$ we have that $L=\frac{\sqrt{n}(\lambda n)^{2m-2}(1+o(1))}{((m-1)!)^2}|\mathcal{F}_{\lambda,n}|$, and hence
	
\begin{equation}\label{coefficients4a} (C_0-D_0){n \choose 2m}+(C_1-D_1)(2m-1){n \choose 2m-1}+O(|\mathcal{F}_{\lambda,n}|\sqrt{n}n^{2m-2})=0.\end{equation}
Now let us go back to \eqref{coefficient3}, and assume that $(a_1,\ldots,a_n)$ is $M$-bounded. Then we have:

\begin{equation*} |\mathcal{F}_{\lambda,n}|\sigma^2\leq |C_0-D_0|\sum_{\substack{i_1< i_2< \ldots<i_{2m}\\ i_1,\ldots, i_{2m}\in [1,n]}} a_{i_1}\ldots a_{i_{2m}}+\end{equation*}
\begin{equation*}+|C_1-D_1|\sum_{\substack{i_1< i_2< \ldots<i_{2m-1}\\ i_1,\ldots, i_{2m-1}\in [1,n]}} \sum_{\ell\in [1,2m-1]}a_{i_1}a_{i_2}\ldots a_{i_\ell}^2\dots a_{i_{2m-1}}+\end{equation*}
\begin{equation*}+O(|\mathcal{F}_{\lambda,n}|)\left(\sum_{j=2}^m \sum_{\substack{i_1< i_2< \ldots<i_{2m-j}\\ i_1,\ldots, i_{2m-j}\in [1,n]}}\sum_{\substack{\ell_1<\ldots<\ell_j\\ \ell_1,\ldots,\ell_j\in [1,2m-j]}} a_{i_1}a_{i_2}\dots a_{i_{\ell_1}}^2\dots a_{i_{\ell_j}}^2\dots a_{i_{2m-j}}\right)\leq \end{equation*}
\begin{equation*} |C_0-D_0| {n \choose 2m}M^{2m}+|C_1-D_1|(2m-1){n \choose 2m-1}M^{2m}+O(|\mathcal{F}_{\lambda,n}|n^{2m-2})M^{2m}.\end{equation*}
Combining this with \eqref{coefficients4a} we get:
\begin{equation*}|\mathcal{F}_{\lambda,n}|\sigma^2\leq 2|C_1-D_1|(2m-1){n \choose 2m-1}M^{2m}+O(|\mathcal{F}_{\lambda,n}|\sqrt{n}n^{2m-2})M^{2m},\end{equation*}
and using claims $(a)$ and $(b)$, we deduce that:

\begin{equation*}\label{coefficients6} |\mathcal{F}_{\lambda,n}|\sigma^2\leq 2(\lambda^{2m-1}+\lambda^{2m}){2m-2\choose m-1}(2m-1) {n \choose 2m-1}(1+o(1))|\mathcal{F}_{\lambda,n}|M^{2m}\end{equation*}
\begin{equation*}\leq \frac{2(\lambda^{2m-1}+\lambda^{2m}) n^{2m-1}(1+o(1))}{((m-1)!)^2} |\mathcal{F}_{\lambda,n}|M^{2m}.\end{equation*}

Finally, if the the entries of $\Sigma$ belong to $\mathbb{Z}^k$ for $k\ge 2$, one can just argue componentwise, obtaining
$$ |\mathcal{F}_{\lambda,n}|\sigma^2\leq \frac{2k(\lambda^{2m-1}+\lambda^{2m}) n^{2m-1}(1+o(1))}{((m-1)!)^2} |\mathcal{F}_{\lambda,n}|M^{2m},$$
namely the claim.
\endproof

\begin{Theorem}\label{teo:multidimguy2}
Let $\Sigma=(a_1,\ldots,a_n)$ be an $M$-bounded sequence in $\mathbb{Z}^k$ such that, for all distinct $A_1,A_2\in \mathcal{F}_{\lambda,n}$ of size at least $m$,
$$e^m_{\Sigma}(A_1) \not= e^m_{\Sigma}(A_2).$$
Then
$$ M^{2m}\geq \begin{dcases}
\left(\frac{((m-1)!)^2\Gamma(k/2 + 1)^{2/k}}{2(\lambda^{2m-1}+\lambda^{2m})\pi(k+2)}\right) \frac{(1+o(1))|\mathcal{F}_{\lambda,n}|^{2/k}}{n^{2m-1}} & \mbox{ if } \lambda< 1/2\\
\left(\frac{2^{2m}((m-1)!)^2\Gamma(k/2 + 1)^{2/k}}{(2^{2m+1}\lambda^{2m-1}+2^{2m+1}\lambda^{2m}+1)\pi(k+2)}\right) \frac{(1+o(1))|\mathcal{F}_{\lambda,n}|^{2/k}}{n^{2m-1}} & \mbox{ if } \lambda= 1/2\\
\left(\frac{(m!)^2\Gamma(k/2 + 1)^{2/k}}{(2m^2\lambda^{2m-1}+2m^2\lambda^{2m}+1)\pi(k+2)}\right) \frac{(1+o(1))|\mathcal{F}_{\lambda,n}|^{2/k}}{n^{2m-1}} & \mbox{ if } \lambda> 1/2. \end{dcases}
$$
\end{Theorem}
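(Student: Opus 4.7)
The plan is to combine the upper bound on $\sigma^2$ from Lemma~\ref{teo:multidimguy2a} with a matching lower bound obtained from a geometric packing argument in $\mathbb{Z}^k$. By hypothesis, the random variable $\bar X = e_\Sigma^m(A)$ takes at least $N_1 := |\mathcal{F}_{\lambda,n}| - \sum_{i=0}^{m-1}\binom{n}{i} = (1+o(1))|\mathcal{F}_{\lambda,n}|$ pairwise distinct values in $\mathbb{Z}^k$, namely those corresponding to subsets $A$ with $|A|\geq m$; the remaining $O(n^{m-1})$ subsets all contribute the zero vector and will be negligible for the asymptotic estimate.

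Interpreting $\sigma^2 = \mathbb{E}[\|\bar X - \mu\|^2]$, I would first lower bound $|\mathcal{F}_{\lambda,n}|\sigma^2 = \sum_{A}\|e_\Sigma^m(A) - \mu\|^2$. Since for any finite set of points in $\mathbb{R}^k$ the sum of squared distances to a reference point is minimized at the centroid, this quantity is at least the minimum second moment about the centroid of $N_1$ distinct lattice points in $\mathbb{Z}^k$. That minimum is achieved asymptotically by packing the points inside a Euclidean ball of radius $r$ with $V_k r^k = N_1(1+o(1))$, where $V_k = \pi^{k/2}/\Gamma(k/2+1)$ is the volume of the unit ball. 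Using the direct computation $\int_{B(0,r)}\|x\|^2\,dx = \tfrac{k V_k r^{k+2}}{k+2}$ and dividing by $|\mathcal{F}_{\lambda,n}|$ then yields
$$\sigma^2 \;\geq\; \frac{k\,\Gamma(k/2+1)^{2/k}}{(k+2)\,\pi}\,|\mathcal{F}_{\lambda,n}|^{2/k}\,(1+o(1)).$$

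To conclude, I would combine this lower bound with each of the three cases of Lemma~\ref{teo:multidimguy2a}. In every regime the factor $k$ cancels from both sides. For example, when $\lambda<1/2$ one divides by $\tfrac{2k(\lambda^{2m-1}+\lambda^{2m})n^{2m-1}}{((m-1)!)^2}$ and isolates $M^{2m}$ to obtain exactly the announced constant $\tfrac{((m-1)!)^2\,\Gamma(k/2+1)^{2/k}}{2(\lambda^{2m-1}+\lambda^{2m})(k+2)\pi}$; the cases $\lambda=1/2$ and $\lambda>1/2$ proceed identically, using the corresponding upper bound from Lemma~\ref{teo:multidimguy2a}.

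The main obstacle will be the lattice packing step: one must verify rigorously that the minimum of $\sum_{i=1}^{N}\|x_i - \bar x\|^2$ over all choices of $N$ pairwise distinct integer points $x_1,\ldots,x_N\in\mathbb{Z}^k$ has leading order $\tfrac{k}{(k+2)V_k^{2/k}}N^{1+2/k}$, and that the $O(n^{m-1})$ repeated zero values in our configuration do not spoil the $(1+o(1))$ factor. Both statements reduce to the standard lattice-point counting estimate that a Euclidean ball of radius $r$ in $\mathbb{R}^k$ contains $V_k r^k(1+o(1))$ integer points as $r\to\infty$, which makes the argument essentially a Minkowski-type second-moment bound.
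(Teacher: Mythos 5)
Your proposal is correct and follows essentially the same approach as the paper's proof: combine the upper bound on $\sigma^2$ from Lemma~\ref{teo:multidimguy2a} with a lower bound on $|\mathcal{F}_{\lambda,n}|\sigma^2$ coming from the observation that the $(1+o(1))|\mathcal{F}_{\lambda,n}|$ distinct values of $e^m_\Sigma(A)$ in $\mathbb{Z}^k$ have second moment about their centroid at least that of a ball of the same volume, i.e.\ $R^{k+2}\frac{k\pi^{k/2}}{\Gamma(k/2+1)(k+2)}(1+o(1))$ with $R=\frac{\Gamma(k/2+1)^{1/k}}{\sqrt\pi}|\mathcal{F}_{\lambda,n}|^{1/k}(1+o(1))$, after which the comparison cancels the factor $k$ and yields each of the three constants. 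The paper outsources the lattice second-moment estimate to the computations in \cite[p.~176]{CDD}, which is precisely the ``Minkowski-type'' step you flag as the main thing to verify.
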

\proof
To prove the claim, we combine Lemma \ref{teo:multidimguy2a} with a lower bound on $|\mathcal{F}_{\lambda,n}|\sigma^2$. The idea here is that the minimal value of $|\mathcal{F}_{\lambda,n}|\sigma^2=\sum_{A\in \mathcal{F}_{\lambda,n}} |e^m_{\Sigma}(A)-\mu|^2$ is obtained when the $e_{\Sigma}^m(A)$'s are packed as close as possible around $\mu$. In other words, they have to fill the $k$-dimensional ball of volume $|\mathcal{F}_{\lambda,n}|-\sum_{i=0}^{m-1} {n \choose i}=(1+o(1))|\mathcal{F}_{\lambda,n}|$ centered in $\mu$. The radius $R$ of such ball is given by:
$$
R = \frac{\Gamma(k/2 + 1)^{1/k}}{\sqrt{\pi}} (1+o(1)) |\mathcal{F}_{\lambda,n}|^{1/k}.
$$
Then, following the computations of \cite[p. 176]{CDD}, we obtain:
\begin{align}\nonumber
|\mathcal{F}_{\lambda,n}|\sigma^2 &\geq R^{2+k} \frac{k\pi^{k/2}}{\Gamma(k/2 + 1)(k+2)}  (1+o(1))\\ \label{eq:spherevolume}
&= \frac{k \Gamma(k/2 + 1)^{2/k}}{\pi (k+2)} (1+o(1))  |\mathcal{F}_{\lambda,n}|^{2/k+1}.
\end{align}

To conclude, it is enough to compare \eqref{eq:spherevolume} with Lemma \ref{teo:multidimguy2a}.
\endproof

\begin{Remark}
Theorem \ref{teo:multidimguy2} can be generalized to sequences $\Sigma$ in $\mathbb{R}^k$ such that for all distinct $A_1, A_2 \in \mathcal{F}_{\lambda, n}$ we have $|e^m_{\Sigma}(A_1) - e^m_{\Sigma}(A_2)|\geq 1$, at the cost of dividing the lower bounds by a factor of $4$. Here we give a sketch of the proof. The key idea is that $\sum_{A\in \mathcal{F}_{\lambda,n}} |e^m_{\Sigma}(A)-\mu|^2$ is lower bounded by $\sum_{i=1}^{|\mathcal{F}_{\lambda,n}|} |z_i|^2$ where $z_1 = 0$ and $z_i \in \left(\frac{1}{2} \Z\right)^k$ for $i \geq 2$. This can be proved by noticing that each real number inside the ball centered at $0$ of radius $R\coloneqq \max_i |z_i|$ is contained in two balls of radius $1/2$ centered at some $z_i,z_j$. On the other hand, since $|e^m_{\Sigma}(A_i) - e^m_{\Sigma}(A_j)|\geq 1$ two balls of radius $1/2$ centered in $e^m_{\Sigma}(A_i)$ and $e^m_{\Sigma}(A_j)$ cannot intersect unless they coincide, and therefore for every $R'\le R$ the ball centered at $0$ of radius $R'$ contains more $z_i$'s than $e^m_{\Sigma}(A_i) - \mu$'s. The lower bound immediately follows. Now $\sum_{i=1}^{|\mathcal{F}_{\lambda,n}|} |z_i|^2$ is exactly what we compute in~\eqref{eq:spherevolume}, divided by a factor of $4$.
\end{Remark}

\begin{Remark}
Theorem \ref{teo:multidimguy2} provides a lower bound of the following form:
$$ M\geq C_{k,m,\lambda}\frac{ n^{1/(2m)} (1+o(1)) |\mathcal{F}_{\lambda,n}|^{1/(mk)}}{n},
$$
improving by a factor of $n^{1/(2m)}$ the trivial pigeonhole lower bound. Since the bound holds for any $\lambda$, we also improve Theorem 2.5 of \cite{CDD}, where the case $\lambda<1/2$ was left open.

On the other hand, if $\lambda=k=1$, the bound of Theorem \ref{teo:multidimguy2} becomes
$$M\geq C_{1,m,1} \frac{ 2^{\frac{n}{m}}}{n^{1-\frac{1}{2m}}} (1+o(1))$$
which has the same trend but a worse constant than the result of Theorem \ref{teo:multidimguy} which is of the form,
$$M\geq C_m \frac{ 2^{\frac{n}{m}}}{n^{1-\frac{1}{2m}}} (1+o(1))$$
where
$$C_{1,m,1}=\frac{(m!)^{1/m}}{\sqrt[2m]{3(4m^2+1)}}<\frac{2^{1-\frac{1}{m}}((m-1)!)^{\frac{1}{m}}}{3^{\frac{1}{2m}}}=C_m.$$
Indeed Corollary \ref{VarianceSigma111} and \eqref{coefficients4a} explain why the terms of higher degree can be replaced but a careful computation of the coefficients (which we are able to do only in some particular cases) allows us to further improve the bound.
\end{Remark}

\section{Upper bounds}

In this section, we provide an upper bound to the value of $M$ in Problem \ref{SmallSizeLambda} using the probabilistic method. In particular, we show that $M \leq 4^{n h(\lambda)/k + o(n)}$, where 
$$h(\lambda) = -\lambda \log_2 \lambda - (1-\lambda) \log_2 (1-\lambda)$$
denotes the binary entropy function. Then, using a direct construction (that we provide for $\lambda = k = 1$ but can be generalized to $\lambda\leq 1$ and $k\geq 1$), we prove that $M \leq 2^{n + o(n)}$. It can be shown computationally that $h (\lambda) < 1/2$ for every $\lambda \in [0, 0.11]$; this means that when $k=1$ and $\lambda\in [0,0.11]$ the probabilistic method improves the result obtained using our direct construction.

In order to proceed, we first need to recall the following celebrated lemma \cite{Schwartz, Zippel}.

\begin{Lemma}[Schwartz-Zippel Lemma]\label{lem:SZ}
Let $\mathbb F$ be a field and $P \in \mathbb{F}[x_1, x_2, \ldots, x_t]$ be a non-zero polynomial of degree $d$. Consider a finite subset $A \subseteq \mathbb{F}$. If $r_1, r_2, \ldots, r_t$ are picked uniformly at random from $A$, then
$$
\Pr[P(r_1, r_2, \ldots, r_t) = 0] \leq \frac{d}{|A|}\,.
$$
\end{Lemma}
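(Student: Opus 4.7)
The plan is to prove the Schwartz–Zippel Lemma by induction on the number of variables $t$, which is the classical approach and handles the multivariate case by reducing it to the univariate base case combined with conditioning on the vanishing of a suitable coefficient polynomial.

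For the base case $t=1$, I would rely on the standard fact that a non-zero univariate polynomial of degree $d$ over a field has at most $d$ roots (this follows from the Euclidean algorithm in $\mathbb F[x]$, since each root $r$ induces a factor $(x-r)$). Consequently the probability that a uniformly random element of $A$ is a root is at most $d/|A|$, which is the desired bound.

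For the inductive step, assume the statement holds for polynomials in $t-1$ variables and let $P\in\mathbb F[x_1,\ldots,x_t]$ be non-zero of total degree $d$. Writing
$$P(x_1,\ldots,x_t)=\sum_{i=0}^{k} x_1^{i}\, P_i(x_2,\ldots,x_t),$$
where $k$ is the largest power of $x_1$ appearing (so $P_k\not\equiv 0$), the polynomial $P_k$ has total degree at most $d-k$ in the remaining variables. The plan is then to split the event $\{P(r_1,\ldots,r_t)=0\}$ according to whether $P_k(r_2,\ldots,r_t)=0$ or not. The inductive hypothesis, applied to $P_k$, bounds the first probability by $(d-k)/|A|$. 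Conditionally on $P_k(r_2,\ldots,r_t)\neq 0$, the univariate polynomial $Q(x_1):=P(x_1,r_2,\ldots,r_t)$ is non-zero of degree exactly $k$, so the base case yields that $Q(r_1)=0$ with conditional probability at most $k/|A|$. Summing these two contributions via a union bound gives the desired estimate $d/|A|$.

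The only subtle point in the argument is to be clean about the conditional probability: one has to ensure that the event $\{P_k(r_2,\ldots,r_t)\neq 0\}$ is independent from the random draw of $r_1$, which holds because the $r_i$ are sampled independently. This is really the whole content of the proof, and the calculation is essentially a one-line union bound once the induction is set up; there is no substantive technical obstacle, the main thing being to carefully track the degree of $P_k$ and correctly identify the leading coefficient in $x_1$.
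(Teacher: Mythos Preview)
Your argument is the standard, correct proof of the Schwartz--Zippel Lemma via induction on the number of variables, and there is no gap in it. Note, however, that the paper does not give its own proof of this statement: it merely recalls the lemma and cites the original references \cite{Schwartz,Zippel}, so there is nothing to compare against beyond observing that your proof is the classical one found in those sources.
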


\begin{Theorem}\label{thm: ubprob}
Let $\lambda$ be a real number in $[0, 1/2)$ and let
$$
C_{\lambda, k} = \sqrt[k]{\frac{4^{h(\lambda) \tau_{\lambda}}}{\tau_{\lambda}}} \text{ and } \tau_{\lambda} = \left \lceil \frac{1}{4^{h(\lambda)}-1} \right \rceil.
$$
Then there exists a $k$-dimensional sequence $\Sigma=(a_1,\ldots,a_n)$ of $\left( C_{\lambda,k} \cdot m \cdot 4^{h(\lambda) n/k}\right)$-bounded elements such that, for all distinct $A_1,A_2\in \mathcal{F}_{\lambda,n}$ of size at least $m$,
$$e^m_{\Sigma}(A_1) \not= e^m_{\Sigma}(A_2).$$
\end{Theorem}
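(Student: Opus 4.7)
The plan is to use the probabilistic method together with the Schwartz--Zippel lemma (Lemma~\ref{lem:SZ}). I would sample each coordinate $a_{i,l}$ of each vector $a_i\in\mathbb{Z}^k$ independently and uniformly at random from $\{0,1,\ldots,N-1\}$, with $N$ an integer to be chosen so that $N-1\le C_{\lambda,k}\cdot m\cdot 4^{h(\lambda)n/k}$.

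For each unordered pair of distinct $A_1,A_2\in\mathcal{F}_{\lambda,n}$ with $|A_1|,|A_2|\ge m$, I introduce the formal polynomial
$$Q_{A_1,A_2}(x_1,\ldots,x_n)\coloneqq e^m_{(x_1,\ldots,x_n)}(A_1) - e^m_{(x_1,\ldots,x_n)}(A_2)\in\mathbb{Z}[x_1,\ldots,x_n].$$
It has degree at most $m$, and one checks that it is not identically zero by picking any $i$ in the symmetric difference (say $i\in A_1\setminus A_2$) and any $(m-1)$-subset $\{j_1,\ldots,j_{m-1}\}\subseteq A_1\setminus\{i\}$, which exists because $|A_1|\ge m$: the monomial $x_ix_{j_1}\cdots x_{j_{m-1}}$ has coefficient $+1$ in $e^m(A_1)$ and coefficient $0$ in $e^m(A_2)$, and therefore survives in $Q_{A_1,A_2}$. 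Applying Schwartz--Zippel coordinatewise and using independence across the $k$ components, I obtain
$$\Pr\bigl[e^m_\Sigma(A_1)=e^m_\Sigma(A_2)\bigr]\le\left(\frac{m}{N}\right)^k.$$
A union bound over the at most $\binom{|\mathcal{F}_{\lambda,n}|}{2}$ pairs then gives a failure probability at most $\binom{|\mathcal{F}_{\lambda,n}|}{2}(m/N)^k$, which I want to be strictly less than $1$. Using the standard inequality $|\mathcal{F}_{\lambda,n}|\le 2^{nh(\lambda)}$, valid for $\lambda<1/2$, it suffices to take $N$ slightly larger than $m\cdot 4^{nh(\lambda)/k}$; the probabilistic method then produces a sequence $\Sigma$ meeting all the distinctness requirements, and this sequence is $(N-1)$-bounded.

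The main obstacle is matching the explicit constant $C_{\lambda,k}=\sqrt[k]{4^{h(\lambda)\tau_\lambda}/\tau_\lambda}$. The integer $\tau_\lambda=\lceil 1/(4^{h(\lambda)}-1)\rceil$ should come out of tightening the strict union-bound inequality into a clean closed form for $N$: one can afford to divide $N$ by $\sqrt[k]{\tau_\lambda}$ at the cost of multiplying by $\sqrt[k]{4^{h(\lambda)\tau_\lambda}}$, and the smallest integer $\tau_\lambda$ satisfying $\tau_\lambda(4^{h(\lambda)}-1)\ge 1$ is exactly the one that makes this trade-off feasible. Once this bookkeeping is carried out, the conclusion on $M\le C_{\lambda,k}\cdot m\cdot 4^{h(\lambda)n/k}$ is immediate; the hard part is not the probabilistic strategy itself, which is a textbook application of Schwartz--Zippel, but the arithmetic verification that the strict inequality $\binom{|\mathcal{F}_{\lambda,n}|}{2}(m/N)^k<1$ is indeed preserved under this particular integer optimization.
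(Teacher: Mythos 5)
Your opening moves are the right ones and match the paper's: random sampling, Schwartz--Zippel applied componentwise to the degree-$m$ difference polynomial (your verification that the polynomial is nonzero is correct and is actually more explicit than the paper, which takes it for granted), and the entropy bound on $|\mathcal{F}_{\lambda,n}|$. But you are missing the key step that produces the specific constant $C_{\lambda,k}$, and your attempt to account for $\tau_\lambda$ is not actually an argument. A pure union bound would require
$$\binom{|\mathcal{F}_{\lambda,n}|}{2}\left(\frac{m}{N}\right)^k<1,$$
which, even using $\binom{r}{2}\le r^2/2$ and $|\mathcal{F}_{\lambda,n}|\le 2^{nh(\lambda)}$, only yields $N>2^{-1/k}\cdot m\cdot 4^{h(\lambda)n/k}$, i.e.\ a constant no better than $2^{-1/k}$. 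There is no $t$ to optimize over in this setup, so the claim that you ``can afford to divide $N$ by $\sqrt[k]{\tau_\lambda}$ at the cost of multiplying by $\sqrt[k]{4^{h(\lambda)\tau_\lambda}}$'' is circular and not grounded in your argument.

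The paper instead uses the deletion (alteration) method: sample a longer sequence $\Sigma'$ of length $n'=n+t$, let $X$ count the bad pairs in $\mathcal{F}_{\lambda,n'}$, bound $\mathbb{E}[X]\le (m/M)^k\, 4^{h(\lambda)n'}=(m/M)^k\,4^{h(\lambda)t}\,4^{h(\lambda)n}$, choose $M$ so that $\mathbb{E}[X]\le t$, fix an outcome with at most $t$ bad pairs, and delete one index from the symmetric difference of each bad pair to obtain an $n$-term sequence with no bad pair. This requires $M^k\ge \frac{4^{h(\lambda)t}}{t}\,m^k\,4^{h(\lambda)n}$; minimizing $g_\lambda(t)=4^{h(\lambda)t}/t$ over positive integers (using convexity and the condition $g_\lambda(\gamma+1)\ge g_\lambda(\gamma)\iff \gamma(4^{h(\lambda)}-1)\ge1$) gives exactly $t=\tau_\lambda$, whence $C_{\lambda,k}$. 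This matters quantitatively: for $\lambda$ small enough that $\tau_\lambda\ge5$ one has $C_{\lambda,k}^k=4^{h(\lambda)\tau_\lambda}/\tau_\lambda$ strictly below $1/2$ (indeed $C_{\lambda,k}^k\sim e/\tau_\lambda\to0$ as $\lambda\to0$), so the bound claimed in the theorem is genuinely sharper than what your union-bound route can deliver, and your proposal does not prove the stated result for those $\lambda$. To fix the argument, replace the ``failure probability $<1$'' step by first-moment plus deletion as above.
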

\begin{proof}
We choose, uniformly at random, a sequence $\Sigma'$ with entries in $[1, M]^k$ and length $n'$ (where the values of $M$ and $n'$ will be specified later). Let $X$ be the random variable that represents the numbers of pairs of elements of $\mathcal{F}_{\lambda,n'}$ such that $e^m_{\Sigma'}(A_1) = e^m_{\Sigma'}(A_2)$.

Then we need to estimate the following expected value
\begin{align*}
\mathbb{E}[X] & =\mathbb{E}(|\{\{A_1,A_2\}: e^m_{\Sigma'}(A_1)=e^m_{\Sigma'}(A_2), A_1,A_2\in \mathcal{F}_{\lambda,n'}, |A_1| \geq m, |A_2| \geq m\}|)\\
& = \sum_{\{A_1,A_2\}:\ A_1,A_2\in \mathcal{F}_{\lambda,n'}, |A_1|\geq m, |A_2| \geq m} \Pr[e^m_{\Sigma'}(A_1)=e^m_{\Sigma'}(A_2)].
\end{align*}
Since each of the $k$ components of $e^m_{\Sigma'}(A_1) - e^m_{\Sigma'}(A_2)$ is a polynomial of degree $m$ and each element's component in $\Sigma'$ is picked independently and uniformly at random in $[1,M]$, then by Lemma \ref{lem:SZ} we have that
$$\Pr[e^m_{\Sigma'}(A_1) - e^m_{\Sigma'}(A_2) = 0] \leq \left(\frac{m}{M}\right)^k\,.$$
Then we have that:
\begin{align*}
\mathbb{E}[X] & \leq \left(\frac{m}{M}\right)^k |\{\{A_1,A_2\}:\ A_1,A_2\in \mathcal{F}_{\lambda,n'}, |A_1| \geq m, |A_2| \geq m\}|.
\end{align*}
Thanks to the well-known inequality $\sum_{i=0}^\ell \binom{r}{i} \leq 2^{h(\ell/r) r}$ for $\ell < r/2$, we obtain:
\begin{equation} \label{eq:probabilisticboundE}\mathbb{E}[X]< \left(\frac{m}{M}\right)^k \cdot \left(\sum_{i=m}^{\lambda n'} \binom{n'}{i} \right)^2 \leq \left(\frac{m}{M}\right)^k 2^{2 h(\lambda) n'}\,.\end{equation}

This means that if $(m / M)^k 2^{2 h(\lambda) n'} \leq t$, there exists a sequence $\Sigma'=(a_1,\ldots,a_{n'})$ of elements in $\mathbb{Z}^k$ with at most $t$ pairs $\{A_1, A_2\}$ that have the same evaluation and satisfy the assumptions. Hence, we can remove $t$ elements from $\Sigma'$ and obtain a new sequence $\Sigma=(a_1,\ldots,a_n)$, with $n = n' - t$ elements, such that $e^m_{\Sigma}(A_1) \neq e^m_{\Sigma}(A_2)$ for all distinct $A_1, A_2 \in \mathcal{F}_{\lambda, n}$. Thanks to \eqref{eq:probabilisticboundE}, we deduce that a sequence $\Sigma$ as in the claim exists whenever
\begin{equation} \label{eq:probabilisticbound}
M \geq \sqrt[k]{\frac{4^{h(\lambda) t}}{t}} \cdot m \cdot 4^{h(\lambda) n/k}.
\end{equation}
One can check that the function $g_{\lambda}(t) \coloneqq \frac{4^{h(\lambda) t}}{t}$ is strictly convex for $t > 0$ and that the minimum integer $\gamma$ for which $g_{\lambda}(\gamma+1) \geq g_{\lambda}(\gamma)$ is equal to $\tau_{\lambda}$. Therefore $t = \tau_{\lambda} = \left \lceil \frac{1}{4^{h(\lambda)}-1} \right \rceil$ is the best choice to optimize \eqref{eq:probabilisticbound}.
\end{proof}

Using the same method of Theorem \ref{thm: ubprob} for $\lambda = 1$, one can easily obtain the following theorem.

\begin{Theorem}\label{thm: upprobm}
There exists a sequence of $n$ elements of $\mathbb{Z}^k$ that is $m$-th evaluation distinct and $M$-bounded such that
$$
M \leq m \cdot 4 ^{n/k}\,.
$$
\end{Theorem}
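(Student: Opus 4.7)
The plan is to apply exactly the probabilistic argument of Theorem~\ref{thm: ubprob}, specialized to $\lambda=1$, and to observe that for this value of $\lambda$ the oversample-and-delete step becomes unnecessary, so the desired bound falls out of a single union bound with no wasted constant factor.

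First, I would set $M \coloneqq m\cdot 4^{n/k}$ (taking the integer ceiling if necessary) and sample each of the $nk$ coordinates $a_{i,j}$ independently and uniformly from the integers in $[1,M]$. For every fixed pair of distinct subsets $A_1,A_2\subseteq[1,n]$ with $|A_1|,|A_2|\ge m$ and every coordinate $j\in[1,k]$, the $j$-th component of $e_{\Sigma}^m(A_1)-e_{\Sigma}^m(A_2)$ is a polynomial of degree $m$ in the variables $a_{1,j},\ldots,a_{n,j}$.

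Next, I would verify that this polynomial is not identically zero. Since $A_1\neq A_2$, by symmetry we may assume there exists $\ell\in A_1\setminus A_2$. As $|A_1|\ge m$, we can complete $\{\ell\}$ to an $m$-subset $S\subseteq A_1$; then the monomial $\prod_{i\in S}a_{i,j}$ has coefficient $1$ in the first polynomial and $0$ in the second, so the difference is non-zero. Lemma~\ref{lem:SZ} then gives that the probability of vanishing in coordinate $j$ is at most $m/M$, and by the independence of the $k$ coordinates,
$$\Pr[e_{\Sigma}^m(A_1)=e_{\Sigma}^m(A_2)]\le\left(\frac{m}{M}\right)^{k}=4^{-n}.$$

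Finally, I would conclude by a union bound. The number of admissible unordered pairs is at most $\binom{2^n}{2}<4^n/2$, so the expected number of collisions is strictly less than $1/2$, and a collision-free realization exists. The only substantive step is the polynomial non-vanishing; once that is in hand the arithmetic is immediate, and the absence of any deletion step is precisely what allows the multiplicative constant in front of $4^{n/k}$ to reduce to simply $m$, rather than the larger $C_{\lambda,k}\cdot m$ of Theorem~\ref{thm: ubprob}.
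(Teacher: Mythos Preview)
Your proof is correct and follows exactly the approach the paper indicates: it is the probabilistic argument of Theorem~\ref{thm: ubprob} specialized to $\lambda=1$, where the full family has size $2^n$ and hence fewer than $4^n/2$ pairs, so that setting $M=m\cdot 4^{n/k}$ already forces the expected number of collisions below $1$ and no deletion step is needed. The only cosmetic slip is the parenthetical ``ceiling'': to keep $M\le m\cdot 4^{n/k}$ you should sample from $[1,\lfloor m\cdot 4^{n/k}\rfloor]$ rather than $[1,\lceil m\cdot 4^{n/k}\rceil]$, and the slack factor $1/2$ from the pair count absorbs the rounding.
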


Now we provide a direct construction of a sequence $\Sigma$ that is $m$-th evaluation distinct. We start by constructing such sequence on the real numbers and then we adapt the idea over the integers.

\begin{Lemma}\label{lem: upper}
Let $\epsilon$ be a positive real number and let $m \geq 2$ be an integer. For every $n$ large enough the sequence of real numbers $\Sigma = (a_1, a_2, \ldots, a_n)$, where
$$a_i = (2+\epsilon)^n - 2^{i-1}\mbox{ for $i=1,2,\ldots,n$},$$
is $m$-th evaluation distinct.
\end{Lemma}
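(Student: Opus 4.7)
The plan is to expand $e^m_\Sigma(A)$ as a polynomial in $N := (2+\epsilon)^n$ and then compare $e^m_\Sigma(A_1) - e^m_\Sigma(A_2)$ coefficient by coefficient. Using the identity $\prod_{j \in S}(N - 2^{j-1}) = \sum_{T \subseteq S}(-1)^{|T|}N^{|S|-|T|}\prod_{j \in T}2^{j-1}$ and summing over $m$-subsets $S \subseteq A$, one obtains
$$e^m_\Sigma(A) = \sum_{k=0}^{m} (-1)^k \binom{|A|-k}{m-k} N^{m-k}\, \sigma_k^*(A),$$
where $\sigma_k^*(A) := \sum_{\{j_1, \ldots, j_k\} \subseteq A} 2^{j_1 + \cdots + j_k - k}$ denotes the $k$-th elementary symmetric polynomial evaluated at $\{2^{j-1} : j \in A\}$.

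Next I would split into two cases. If $|A_1| \neq |A_2|$ (both $\geq m$), the coefficient of $N^m$ in $e^m_\Sigma(A_1) - e^m_\Sigma(A_2)$ is $\binom{|A_1|}{m} - \binom{|A_2|}{m}$, a non-zero integer because $\binom{s}{m}$ is strictly increasing in $s$ on $[m,\infty)$. If $|A_1| = |A_2| = s$, the $N^m$ terms cancel and the coefficient of $N^{m-1}$ becomes $-\binom{s-1}{m-1}(\sigma_1^*(A_1) - \sigma_1^*(A_2))$; since $\sigma_1^*(A) = \sum_{j \in A} 2^{j-1}$ is the binary encoding of $A$ and hence injective on the power set of $[1,n]$, this leading coefficient is again a non-zero integer.

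In both cases it remains to verify that, for $n$ large enough depending on $\epsilon$ and $m$, the leading term dominates the sum of all lower-order terms by at least $1$ in absolute value. Using the crude bound $|\sigma_k^*(A)| \leq \binom{n}{k}2^{k(n-1)}$, the $N^{m-k}$ contribution is $O(\mathrm{poly}(n,m) \cdot 2^{kn} \cdot N^{m-k})$, so the ratio to the dominant term is controlled by a quantity of the form $\bigl((2/(2+\epsilon))^n\bigr)^{j} \cdot \mathrm{poly}(n,m)$ with $j\ge 1$, which decays exponentially for any positive $\epsilon$.

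The main obstacle lies in making this last step rigorous in the same-size case, since a careless application of the crude bound would only yield dominance for $\epsilon > 2$ (a factor $4^n/N$ rather than $2^n/N$ appears to arise from the $N^{m-2}$ correction). The fix is to exploit the disjoint-union decomposition $A_i = C \sqcup D_i$ with $C = A_1 \cap A_2$, $D_1 = A_1 \setminus A_2$, $D_2 = A_2 \setminus A_1$, together with the convolution identity $\sigma_k^*(C \sqcup D) = \sum_{j=0}^k \sigma_j^*(C)\sigma_{k-j}^*(D)$: these rewrite $\sigma_k^*(A_1) - \sigma_k^*(A_2)$ as a sum of pieces each bounded by $O(\mathrm{poly}(n,m)\cdot 2^{(k-1)n})\cdot|\sigma_1^*(A_1) - \sigma_1^*(A_2)|$. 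Plugging this into the $N^{m-k}$ estimate gives a ratio of $((2^n/N)^{k-1}\cdot \mathrm{poly}(n,m)) \to 0$ for every $\epsilon>0$, completing the proof.
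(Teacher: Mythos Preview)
Your expansion of $e^m_\Sigma(A)$ as a polynomial in $N=(2+\epsilon)^n$ and the two-case split coincide exactly with the paper's argument, down to using injectivity of $A\mapsto\sum_{j\in A}2^{j-1}$ in the equal-size case. You also correctly spot a subtlety that the paper glosses over: with the crude bound $|\sigma_k^*(A_i)|=O(\mathrm{poly}(n)\,2^{kn})$, the $N^{m-2}$ correction is $O(\mathrm{poly}(n)\,4^n N^{m-2})$, and this is $o(N^{m-1})$ only when $4^n=o((2+\epsilon)^n)$, i.e.\ $\epsilon>2$. The paper's own proof asserts that the $j\ge 2$ terms are negligible using precisely this crude estimate, so it has the same gap you identify.

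Your proposed repair, however, does not work as stated. The claim that the convolution identity yields
$|\sigma_k^*(A_1)-\sigma_k^*(A_2)|=O(\mathrm{poly}(n)\,2^{(k-1)n})\cdot|\sigma_1^*(A_1)-\sigma_1^*(A_2)|$
is false. Take $n=2d$, $A_1=\{1,\dots,d-1,n\}$, $A_2=\{d,\dots,n-1\}$ (so $C=\varnothing$, $|A_1|=|A_2|=d$): then $|\delta_1|=2^d-1\asymp 2^{n/2}$ while $|\delta_2|\asymp 2^{2n}$, giving $|\delta_2|/|\delta_1|\asymp 2^{3n/2}$, not $O(\mathrm{poly}(n)\,2^n)$. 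The problem is the pieces $\sigma_j^*(C)\bigl(\sigma_{k-j}^*(D_1)-\sigma_{k-j}^*(D_2)\bigr)$ with $k-j\ge 2$: there is no reason for $|\sigma_\ell^*(D_1)-\sigma_\ell^*(D_2)|$ to be controlled by $2^{(\ell-1)n}|\gamma_1|$. One route that does work (for $m=2$, and extendable) is a sign argument rather than a magnitude bound: writing $2\delta_2=(\sigma_1^*(A_1)+\sigma_1^*(A_2))\delta_1-\bigl(\sum_{A_1}4^{j-1}-\sum_{A_2}4^{j-1}\bigr)$, one checks that the two contributions to $-(t-1)N\delta_1+\delta_2$ carry the same sign, so no cancellation occurs and the difference is bounded below by $(t-1)N-2^n\to\infty$.
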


\begin{proof}
Suppose by contradiction there exists two distinct subsets $B, C \subseteq [1,n]$ with $|B|,|C|\ge m$ such that
\begin{equation}\label{eq: eqcon}
|e_{\Sigma}^m (B) - e_{\Sigma}^m (C)| < 1\,.
\end{equation}
For an arbitrary subset $S \subseteq [1,n]$ with $|S| \geq m$, by definition we have:
%\begin{multline*}
%	e_{\Sigma}^m (S) = (2+\epsilon_1)^{m n} \binom{|S|}{m} - (2+\epsilon_1)^{(m-1) n} \binom{|S|-1}{m-1} \sum_{i \in S} (2+\epsilon_2)^{i-1} + (2+\epsilon_1)^{(m-2) n}\\ \binom{|S|-2}{m-2} \sum_{\{i_1, i_2\} \subseteq S} (2+\epsilon_2)^{i_1+i_2-2} + \ldots + (-1)^m \sum_{\{i_1, i_2, \ldots, i_m\} \subseteq S} (2+\epsilon_2)^{i_1+i_2+\ldots+i_m-m}\,.
%\end{multline*}
%In a more compact form we have that
\begin{equation}\label{m_evaluation}
e_{\Sigma}^m (S) = \sum_{j=0}^m (-1)^j (2+\epsilon)^{(m-j)n} \binom{|S|-j}{m-j} \sum_{\substack{\{i_1, i_2, \ldots, i_j\} \subseteq S \\ i_1 < i_2 < \ldots < i_j}} 2^{i_1 + i_2 + \ldots + i_j - j}\,.
\end{equation}
We first show that Inequality \eqref{eq: eqcon} implies $|B| = |C|$. Suppose without loss of generality that $|B| > |C|$. Then \eqref{m_evaluation} implies that:
\begin{multline}\label{eq: diffeval}
e_{\Sigma}^m (B) - e_{\Sigma}^m (C) = (2+\epsilon)^{mn} \left[ \binom{|B|}{m} - \binom{|C|}{m} \right] + \sum_{j=1}^m (-1)^j (2+\epsilon)^{(m-j)n} \\ \left[\binom{|B|-j}{m-j} \sum_{\substack{\{i_1, i_2, \ldots, i_j\} \subseteq B \\ i_1 < i_2 < \ldots < i_j}} 2^{i_1 + i_2 + \ldots + i_j - j} - \binom{|C|-j}{m-j} \sum_{\substack{\{i_1, i_2, \ldots, i_j\} \subseteq C \\ i_1 < i_2 < \ldots < i_j}} 2^{i_1 + i_2 + \ldots + i_j - j} \right] \,.
\end{multline}
Now it can be seen that each term in the first summation of \eqref{eq: diffeval} is of order
$$O\left(n^{m} (2+\epsilon)^{(m-j)n}2^{jn}\right)\,,$$ for $j=1,2,\ldots,m$ and $n \to \infty$. Hence, asymptotically in $n$, we can rewrite \eqref{eq: diffeval} as
$$
e_{\Sigma}^m (B) - e_{\Sigma}^m (C) = (2+\epsilon)^{mn} \left[ \binom{|B|}{m} - \binom{|C|}{m} \right] (1 + o(1)) \,,
$$
since $\epsilon > 0$. This clearly contradicts \eqref{eq: eqcon}, and hence we must have $|B| = |C|$.

Next, let $t$ be an integer such that $|B|=|C|=t$ and let $B \coloneqq \{b_1, b_2, \ldots, b_t\}$ and $C \coloneqq \{c_1, c_2, \ldots, c_t\}$, where $b_i, c_i \in [1,n]$ for every $i \in [1,t]$. Then we have:

\begin{multline} \label{eq: reduceval}
\left|e_{\Sigma}^m (B) - e_{\Sigma}^m (C)\right| = \Bigg| (2+\epsilon)^{(m-1)n} \binom{t-1}{m-1} \left( \sum_{i=1}^t 2^{b_i-1} - 2^{c_i-1} \right) + \sum_{j=2}^m (-1)^{j-1} \\ (2+\epsilon)^{(m-j)n} \binom{t-j}{m-j} \left( \sum_{\substack{1 \leq i_1 < i_2 < \ldots < i_j \leq t}} 2^{b_{i_1} + b_{i_2} + \ldots + b_{i_j}-j} - 2^{c_{i_1} + c_{i_2} + \ldots + c_{i_j}-j} \right) \Bigg|\,.
\end{multline}
To conclude the proof, we need to lower bound \eqref{eq: reduceval}. Since $B \neq C$, it is easy to see that
%$$
%\sum_{i \leq \ell} 2^{c_i-1} \leq \sum_{1 \leq i \leq c_{\ell}} (2+\epsilon_2)^{i-1} < \frac{(2+\epsilon_2)^{c_{\ell}}}{1+\epsilon_2} \leq \frac{(2+\epsilon_2)^{b_{\ell}-1}}{1+\epsilon_2}\,,
%$$
$$
\left|\sum_{i = 1}^t 2^{b_i-1} - 2^{c_i-1} \right| \geq 1
$$
and since each term in the summation over $j$ in \eqref{eq: reduceval} is, as $n \to \infty$, of order
$$O\left(n^{m} (2+\epsilon)^{(m-j)n} 2^{j n} \right)\,,$$
we obtain the following lower bound:
$$
\left|e_{\Sigma}^m (B) - e_{\Sigma}^m (C)\right| > \left|(2+\epsilon)^{(m-1)n} \binom{t-1}{m-1} \right| (1 + o(1)) \,.
$$
The claim now follows from the fact that the right-hand side of the above inequality is greater than $1$ for sufficiently large $n$'s.
\end{proof}

Next, we adapt the idea of Lemma \ref{lem: upper} to work over the integers, taking integer parts.

\begin{Lemma}\label{cor: integers}
Let $\epsilon$ be a positive real number and let $m \geq 2$ be an integer. Then for every $n$ large enough the sequence $\Sigma = (a_1, a_2, \ldots, a_n)$, where
$$a_i = \left\lfloor(2+\epsilon)^n - 2^{i-1}\right\rfloor \mbox{ for } i=1,2,\ldots,n,$$
is $m$-evaluation distinct.
\end{Lemma}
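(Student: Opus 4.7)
My plan is to reduce Lemma \ref{cor: integers} essentially verbatim to Lemma \ref{lem: upper}, by exploiting the fact that the subtracted quantities $2^{i-1}$ are themselves integers. First, I would invoke the identity $\lfloor x - k\rfloor = \lfloor x\rfloor - k$ for any real $x$ and integer $k$, so that
$$
a_i \;=\; \lfloor (2+\epsilon)^n - 2^{i-1}\rfloor \;=\; \lfloor (2+\epsilon)^n\rfloor - 2^{i-1}.
$$
Setting $N\coloneqq \lfloor(2+\epsilon)^n\rfloor\in\mathbb{Z}$, the sequence takes the clean form $a_i = N - 2^{i-1}$, where $N$ is a positive integer satisfying $N = (2+\epsilon)^n(1+o(1))$ as $n\to\infty$, since $0\le (2+\epsilon)^n - N<1$.

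With this simplification, the polynomial expansion \eqref{m_evaluation} from the proof of Lemma \ref{lem: upper} goes through unchanged with $(2+\epsilon)^n$ replaced by the integer $N$: for every $S\subseteq [1,n]$ with $|S|\ge m$,
$$
e_\Sigma^m(S) \;=\; \sum_{j=0}^{m}(-1)^j\, N^{m-j}\binom{|S|-j}{m-j}\sum_{\substack{\{i_1,\ldots,i_j\}\subseteq S\\ i_1<\ldots<i_j}} 2^{i_1+\ldots+i_j-j}.
$$
Because $N^k = (2+\epsilon)^{kn}(1+o(1))$ for each fixed $k$, every order-of-magnitude estimate in the proof of Lemma \ref{lem: upper} is preserved up to a $(1+o(1))$ factor.

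I would then run the same two-step argument as in Lemma \ref{lem: upper}: assume for contradiction that $e_\Sigma^m(B) = e_\Sigma^m(C)$ for distinct $B,C\subseteq[1,n]$ of size at least $m$. The $j=0$ contribution to $e_\Sigma^m(B)-e_\Sigma^m(C)$ has magnitude $N^m\bigl|\binom{|B|}{m}-\binom{|C|}{m}\bigr|$, which dominates the lower-order terms unless $|B|=|C|$. When $|B|=|C|=t$, the $j=0$ terms cancel, and the $j=1$ contribution is bounded below in absolute value by $N^{m-1}\binom{t-1}{m-1}$, because the integer $\sum_{i\in B}2^{i-1}-\sum_{i\in C}2^{i-1}$ is nonzero (uniqueness of binary expansions) and therefore has absolute value at least $1$. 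The $j\ge 2$ contributions remain of strictly smaller asymptotic order, so for $n$ large enough $|e_\Sigma^m(B)-e_\Sigma^m(C)|\ge 1>0$, yielding the required contradiction.

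The main conceptual point is simply that since each $2^{i-1}$ is already an integer, the floor operation shifts the entire sequence by a single integer constant and does not interfere with the $2^{i-1}$ part that drives the distinctness argument; hence the algebraic structure used in Lemma \ref{lem: upper} is preserved exactly. I do not expect a serious obstacle: the only verification required is that replacing $(2+\epsilon)^n$ by the integer $N$ in the asymptotic estimates is harmless, which is immediate from $0\le(2+\epsilon)^n-N<1$.
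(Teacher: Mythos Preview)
Your proposal is correct and follows the same two-step contradiction scheme as the paper (first force $|B|=|C|$ via the dominant $j=0$ term, then use uniqueness of binary expansions for the $j=1$ term), but your algebraic setup is cleaner. The paper writes $a_i=(2+\epsilon)^n-(2^{i-1}+\delta)$ with $\delta$ the fractional part of $(2+\epsilon)^n$, and consequently carries products of the form $(2^{i_1-1}+\delta)\cdots(2^{i_j-1}+\delta)$ through the expansion, bounding them by $(2^n+1)^j\le e\cdot 2^{jn}$. You instead use the identity $\lfloor x-k\rfloor=\lfloor x\rfloor-k$ for integer $k$ to write $a_i=N-2^{i-1}$ with $N=\lfloor(2+\epsilon)^n\rfloor$, so the expansion \eqref{m_evaluation} holds verbatim with $N$ in place of $(2+\epsilon)^n$ and no $\delta$-perturbations appear at all. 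This buys you a literal reduction to Lemma \ref{lem: upper}: the sequence is exactly of the form $L-2^{i-1}$ with $L=N\sim(2+\epsilon)^n$, and since the proof of Lemma \ref{lem: upper} only uses the asymptotic size of the base, nothing further needs checking. The paper's route reaches the same conclusion but does redundant work tracking $\delta$.
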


\begin{proof}
Write $a_i = (2+\epsilon)^n - 2^{i-1} - \delta$, where $\delta$ is the fractional part of $(2+\epsilon)^n$.
Proceeding as in the proof of Lemma \ref{lem: upper}, we suppose by contradiction that there exist two distinct subsets $B, C \subseteq [1,n]$ such that
\begin{equation}\label{eq: eqcon2}
|e_{\Sigma}^m (B) - e_{\Sigma}^m (C)| = 0\,.
\end{equation}

We first show that \eqref{eq: eqcon2} implies $|B| = |C|$. Suppose without loss of generality that $|B| > |C|$. Then we have that
\begin{multline}\label{eq: diffeval2}
e_{\Sigma}^m (B) - e_{\Sigma}^m (C) = (2+\epsilon)^{mn} \left[ \binom{|B|}{m} - \binom{|C|}{m} \right] + \sum_{j=1}^m (-1)^j (2+\epsilon)^{(m-j)n} \\ \Bigg[\binom{|B|-j}{m-j} \sum_{\substack{\{i_1, i_2, \ldots, i_j\} \subseteq B \\ i_1 < i_2 < \ldots < i_j}} (2^{i_1-1} + \delta) \cdots (2^{i_j-1} + \delta) - \qquad \qquad \qquad \qquad \qquad \\ \binom{|C|-j}{m-j} \sum_{\substack{\{i_1, i_2, \ldots, i_j\} \subseteq C \\ i_1 < i_2 < \ldots < i_j}} (2^{i_1-1} + \delta) \cdots (2^{i_j-1} + \delta) \Bigg] \,.
\end{multline}
We observe that for fixed $i_1, i_2, \ldots, i_j$ and for $n$ large enough we have that
$$
(2^{i_1-1} + \delta) \cdots (2^{i_j-1} + \delta) \leq (2^n +1)^j \leq e 2^{jn}\,,
$$
since $\delta \leq 1$ and $j \leq m \leq n$.
It follows that each term in the first summation of \eqref{eq: diffeval2} is of order
$$O\left(n^{m} (2+\epsilon)^{(m-j)n}2^{jn}\right)\,,$$ for $j=1,2,\ldots,m$ and $n \to \infty$. Hence, asymptotically in $n$, we can rewrite \eqref{eq: diffeval2} as
$$
e_{\Sigma}^m (B) - e_{\Sigma}^m (C) = (2+\epsilon)^{mn} \left[ \binom{|B|}{m} - \binom{|C|}{m} \right] (1 + o(1)) \,,
$$
since $\epsilon > 0$. This clearly contradicts \eqref{eq: eqcon2}, and hence we must have $|B| = |C|$.

Now, as done in Lemma \ref{lem: upper}, let $t$ be an integer such that $|B|=|C|=t$ and let $B \coloneqq \{b_1, b_2, \ldots, b_t\}$ and $C \coloneqq \{c_1, c_2, \ldots, c_t\}$, where $b_i, c_i \in [1,n]$ for every $i \in [1,t]$. Then we have:

\begin{multline} \label{eq: reduceval2}
\left|e_{\Sigma}^m (B) - e_{\Sigma}^m (C)\right| = \\ \Bigg| (2+\epsilon)^{(m-1)n} \binom{t-1}{m-1} \left( \sum_{i=1}^t \left(2^{b_i-1} + \delta\right) - \left(2^{c_i-1}+\delta\right) \right) + \sum_{j=2}^m (-1)^{j-1} (2+\epsilon)^{(m-j)n} \\ \binom{t-j}{m-j} \left( \sum_{\substack{1 \leq i_1 < i_2 < \ldots < i_j \leq t}} (2^{b_{i_1}-1} + \delta) \cdots (2^{b_{i_j}-1} + \delta) - (2^{c_{i_1}-1} + \delta) \cdots (2^{c_{i_j}-1} + \delta) \right) \Bigg|\,.
\end{multline}

Since $B \neq C$, we have that $\left|\sum_{i = 1}^t 2^{b_i-1} - 2^{c_i-1} \right| \geq 1$. On the other hand, each term in the summation over $j$ in \eqref{eq: reduceval2} is, as $n \to \infty$, of order $O\left(n^{m} (2+\epsilon)^{(m-j)n} 2^{j n} \right)$. Consequently we obtain the following lower bound:
$$
\left|e_{\Sigma}^m (B) - e_{\Sigma}^m (C)\right| > \left|(2+\epsilon)^{(m-1)n} \binom{t-1}{m-1} \right| (1 + o(1)) \,.
$$
Once again, the right-hand side of the above inequality is greater than $1$ for sufficiently large $n$'s, and the proof is complete.
\end{proof}

As a consequence of Corollary \ref{cor: integers}, we obtain the following theorem.

\begin{Theorem}\label{thm: ubdirect}
There exists a sequence $\Sigma = (a_1, a_2, \ldots, a_n)$ of $n$ integers that is $m$-evaluation distinct and $M$-bounded such that
$$
M \leq 2^{n(1 + o(1))}\,,
$$
for $n \to \infty$.
\end{Theorem}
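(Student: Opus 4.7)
The plan is to derive Theorem \ref{thm: ubdirect} as a direct consequence of Lemma \ref{cor: integers}, by letting the parameter $\epsilon$ decay with $n$ instead of being fixed once and for all. For any fixed $\epsilon>0$ and every $n$ large enough (with a threshold that depends on $\epsilon$), Lemma \ref{cor: integers} produces an $m$-evaluation distinct sequence $\Sigma=(a_1,\ldots,a_n)$ with $a_i=\lfloor(2+\epsilon)^n-2^{i-1}\rfloor$. Since the largest entry is $a_1\leq(2+\epsilon)^n$, the sequence is $M$-bounded with $M\leq(2+\epsilon)^n$.

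To turn this into the asymptotic bound $M\leq 2^{n(1+o(1))}$, I would fix a sequence $\epsilon_k\downarrow 0$ and, for each $k$, let $N_k$ be a threshold after which the sequence produced by Lemma \ref{cor: integers} at parameter $\epsilon_k$ is $m$-evaluation distinct. For each $n$ with $N_k\leq n<N_{k+1}$ I would take the corresponding sequence, which is $M$-bounded with $M\leq(2+\epsilon_k)^n$. Since $k=k(n)\to\infty$ as $n\to\infty$, taking $n$-th roots gives $\limsup_{n\to\infty} M(n)^{1/n}\leq 2+\epsilon_k$ for every $k$, hence $\limsup_{n\to\infty} M(n)^{1/n}\leq 2$. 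This is equivalent to $\log_2 M(n)=n(1+o(1))$, which is exactly the statement of the theorem.

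No step in this reduction is delicate: once Lemma \ref{cor: integers} is in hand, the only work is the bookkeeping argument that threads together the thresholds arising from a vanishing sequence of $\epsilon$'s. If one wished to replace the soft asymptotic $2^{n(1+o(1))}$ with an explicit rate, the main obstacle would be to revisit the proof of Lemma \ref{cor: integers} and track how the hidden $o(1)$ there depends on $\epsilon$, so as to choose $\epsilon=\epsilon_n$ tending to $0$ at an explicit rate (for instance $\epsilon_n=1/\log n$, which would give a bound of the form $M\leq 2^n\cdot 2^{O(n/\log n)}$). This quantitative refinement, however, is not needed for the stated theorem, and so the proof amounts to the one-line application of Lemma \ref{cor: integers} followed by the diagonal argument above.
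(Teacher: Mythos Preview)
Your proposal is correct and follows essentially the same approach as the paper: both apply Lemma \ref{cor: integers} with a vanishing sequence $\epsilon_k\downarrow 0$ and thread together the resulting thresholds via a diagonal argument to conclude $M\leq 2^{n(1+o(1))}$. If anything, your version is slightly more careful, since you explicitly cover every sufficiently large $n$ via the intervals $[N_k,N_{k+1})$, whereas the paper only exhibits the bound along the subsequence $n=n_h$.
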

\begin{proof}
 For every $h\in \mathbb{N}_{\ge 1}$ let $\epsilon_h = 2/h$, and let $\Sigma_{n,h} \coloneqq (a_1, a_2, \ldots, a_n)$ with $a_i=\lfloor (2+\epsilon_h)^n-2^{i-1}\rfloor$ for every $i\ge 1$. By Corollary \ref{cor: integers}, there exists a positive integer $n_h$ such that for $n\ge n_h$ the sequence $\Sigma_{n,h}$ is $(2+\epsilon_h)^n$-bounded and $m$-th evaluation distinct. Since $\epsilon_h = 2/h$, we have that $\Sigma_{n,h}$ is $(2^{n + n/h \log_2 e})$-bounded. We may assume, without loss of generality, that the sequence $\{n_h\}_{h\in \N}$ is increasing in $h$. Now the sequences of integers $\Sigma_{n_h,h}$ are $m$-evaluation distinct and $M$-bounded where
$$
M \leq 2^{n_h + \frac{1}{h} n_h \log_2 e}.
$$
Letting $h\to \infty$, one obtains the claim.
\end{proof}

\begin{Remark}
We have stated Theorem \ref{thm: ubdirect} for $\lambda=k=1$ but, clearly, the same bound holds also for any integer $k\geq 1$ and any positive real $\lambda\leq 1$. Similarly, the result of Theorem \ref{thm: upprobm} holds also for $\lambda\leq 1$. Here, assuming $k=1$, the bound given in Theorem \ref{thm: ubprob} improves that of Theorem \ref{thm: ubdirect} for every $\lambda \in [0, 0.11]$ and for every $\lambda < 1/2$ when $k=2$. If instead $k\geq 3$, then the best upper bound is that of Theorem \ref{thm: upprobm} when $\lambda>1/2$ and that of Theorem \ref{thm: ubprob} otherwise.
\end{Remark}

\section*{Acknowledgements}
The first and the third authors were partially supported by INdAM--GNSAGA.

\end{document}